\newcommand{\abs}{\vspace{12pt}}
\DeclareMathOperator{\id}{id}
\DeclareMathOperator{\Int}{Int}
\DeclareMathOperator{\Clos}{Clos}
\DeclareMathOperator{\Diff}{Diff}
\DeclareMathOperator{\card}{card}
\DeclareMathOperator{\Fix}{Fix}
\DeclareMathOperator{\la}{\langle}
\DeclareMathOperator{\ra}{\rangle}
\newcommand{\h}{h_{\text{top}}}
\newcommand{\skp}{{\langle .,. \rangle}}
\newcommand{\R}{\mathbb{R}}
\newcommand{\Z}{\mathbb{Z}}
\newcommand{\N}{\mathbb{N}}
\newcommand{\g}{\mathfrak{g}}
\renewcommand{\g}{\gamma}
\newcommand{\LL}{\mathcal{L}}
\renewcommand{\O}{\mathcal{O}}
\newcommand{\CC}{\mathcal{C}}
\newcommand{\RR}{\mathcal{R}}
\newcommand{\W}{\mathcal{W}}
\newcommand{\A}{{\mathcal{A}}}
\renewcommand{\AA}{{\mathbb{A}}}
\newcommand{\T}{{\mathbb{T}^2}}
\newcommand{\s}{{\mathbb{S}^2}}
\newcommand{\e}{\varepsilon}
\newcommand{\gam}{\gamma}
\newcommand{\Gam}{\Gamma}
\newcommand{\al}{\alpha}
\newcommand{\om}{\omega}
\newcommand{\lam}{\lambda}
\newcommand{\sig}{\sigma}
\theoremstyle{plain}
\newtheorem{defn}{Definition}[section]
\newtheorem{lemma}[defn]{Lemma}
\newtheorem{thm}[defn]{Theorem}
\newtheorem{cor}[defn]{Corollary}
\theoremstyle{definition}
\newtheorem{remark}[defn]{Remark}
\newcommand{\mane}{Ma\~n\'e}
\newcommand{\Poincare}{Poincar\'e }
\begin{document}

\hypersetup{pdftitle = {Ergodic components and topological entropy in geodesic flows of surfaces}, pdfauthor = {Jan Philipp Schr\"oder}}

\title[Ergodicity and entropy in geodesic flows of surfaces]{Ergodic components and topological entropy in geodesic flows of surfaces}
\author[J. P. Schr\"oder]{Jan Philipp Schr\"oder}
\address{Faculty of Mathematics \\ Ruhr University \\ 44780 Bochum \\ Germany}
\email{\url{jan.schroeder-a57@rub.de}}
\keywords{Finsler metrics, 2-sphere, 2-torus, monotone twist map, topological entropy, ergodic component, dense orbit, invariant sets}
\subjclass[2010]{Primary 37J35, Secondary 37E99, 37A25}

\begin{abstract}
We consider the geodesic flow of reversible Finsler metrics on the 2-sphere and the 2-torus, whose geodesic flow has vanishing topological entropy. Following a construction of A. Katok, we discuss examples of Finsler metrics on both surfaces, which have large ergodic components for the geodesic flow in the unit tangent bundle. On the other hand, using results of J. Franks and M. Handel, we prove that ergodicity and dense orbits cannot occur in the full unit tangent bundle of the 2-sphere, if the Finsler metric has positive flag curvatures and at least two closed geodesics. In the case of the 2-torus, we show that ergodicity is restricted to strict subsets of tubes between flow-invariant tori in the unit tangent bundle of the 2-torus.
\end{abstract}

\maketitle


\section{Introduction and main results}

We recall the definitions of Finsler metrics and topological entropy.

\begin{defn} \label{def finsler}
Let $X$ be a manifold with zero section $0_X\subset TX$. A function $F:TX\to [0,\infty)$ is called a \emph{Finsler metric} on $X$, if
\begin{enumerate}
 \item $F$ is $C^\infty$ in $TX-0_X$,
 
 \item $F$ is positively homogeneous, i.e. $F(av)=a F(v)$ for all $v\in TX$ and all $a\geq 0$ and
 
 \item $F^2$ is strongly convex in the fibers, i.e. for all $x\in X, v\in T_xX-\{0\}$, the square $F^2|_{T_xX}$ has a positive definite Hessian at $v$.
\end{enumerate}
We denote by
\[ SX := \{v\in TX : F(v)=1\} \]
the \emph{unit tangent bundle} with respect to $F$. The \emph{geodesic flow} of $F$ (defined by the critical curves of the energy functional $\int_a^bF^2(\dot c) dt$) is denoted by
\[ \phi_F^t:SX \to SX. \]
$F$ is called {\em reversible}, if $F(-v)=F(v)$ for all $v$ and \emph{Riemannian}, if $F(v)=\sqrt{g(v,v)}$ for some Riemannian metric $g$ on $X$ and all $v\in TX$.
\end{defn}

\pagebreak

In order to characterize the dynamical complexity of flows or homeomorphisms, one can use topological entropy, which measures the exponential orbit growth.

\begin{defn}
Let $(X,d)$ be a compact metric space and $\phi^t:X\to X$ be a continuous or discrete dynamical system (i.e. a group action of homeomorphisms on $X$ by $\R, \Z$, respectively). For $T\geq 0 ,\e>0$ define
\[ s(T,\e) := \sup \card S , \]
where the supremum is taken over all subsets $S\subset X$ with the property, that
\[ \forall x,y\in S, x\neq y : \quad \max_{0\leq t \leq T} d(\phi^tx,\phi^ty)>\e  . \]
The \emph{topological entropy} of $\phi^t:X\to X$ is defined by
\[ \h(\phi^t) = \h(\phi^t,X) = \lim_{\e\to 0} ~ \limsup_{T\to\infty} ~ \frac{\log s(T,\e)}{T}. \]
\end{defn}

\begin{remark}\label{entropy poincare map}
In the following, we will use that the entropy of flows bounds the entropies of its first-return maps to \Poincare sections, provided the return-times are uniformly bounded (which will be the case in our applications), cf. the arguments in the proof of Proposition 2.1 in \cite{labrousse}. Hence, if the flow $\phi^t$ has zero entropy, the same is true for the first-return maps to \Poincare sections occurring below.
\end{remark}

In this paper, we consider Finsler metrics on orientable, closed surfaces $X$, whose geodesic flow has vanishing topological entropy. This last condition implies that the surface has to be the sphere $X=\s$ or the torus $X=\T$ (cf. Corollary 4.2 in \cite{dinaburg}).

\subsection{The 2-sphere}

In \cite{katok}, A. Katok gave the following two examples of Finsler metrics on the 2-sphere. In what follows, ergodicity of the geodesic flow $\phi_F^t$ is always meant with respect to the canonical {\em Liouville measure} in $S\s$, induced by the pullback of the canonical symplectic form in $T^*\s$ under the Legendre transform associated to $\frac{1}{2}F^2$.

\begin{thm}[Katok]\label{thm sphere katok}
\begin{enumerate}
 \item There exist non-reversible Finsler metrics on $\s$, arbitrarily close to the standard round metric, with only two closed geodesics and in particular vanishing topological entropy, whose geodesic flow is ergodic in the whole unit tangent bundle.
 
 \item There exist reversible Finsler metrics on $\s$, arbitrarily close to the standard round metric, whose geodesic flow has vanishing topological entropy and which admits two ergodic components $E_0,E_1$, such that $E_1=\{-v: v\in E_0\}$ and such that $E_0\cup E_1$ is arbitrarily large in the unit tangent bundle.
\end{enumerate}
\end{thm}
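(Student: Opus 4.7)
\emph{Plan.} I would adapt Katok's Randers-type construction on $\s$, perturbing the round metric $g_0$ by means of its rotational Killing field $V$ (scaled so that $\kappa := \max_\s |V|_{g_0} < 1$). Set $\eta(v) := g_0(V,v)$.

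\emph{Part (1).} For $0 < \alpha < 1/\kappa$, consider the Randers metric
\[ F_\alpha(v) := \sqrt{g_0(v,v)} + \alpha\, \eta(v), \]
which is smooth, non-reversible, strongly convex, and $C^\infty$-close to $\sqrt{g_0}$ as $\alpha \to 0$. By the Zermelo-navigation description, a curve $\gamma$ is an $F_\alpha$-geodesic iff $\psi^{-\alpha t}(\gamma(t))$ is a $g_0$-geodesic, where $\psi^s$ denotes the flow of $V$. Since the $g_0$-geodesic flow is a free $S^1$-action of period $2\pi$ on $S\s$, this realises $\phi_{F_\alpha}^t$ as a skew product over the circle action, twisted by the rotation induced by $V$ on the quotient (itself diffeomorphic to $\s$ and parametrising oriented great circles). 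For $\alpha$ in a dense set of parameters the induced rotation is irrational away from its two fixed points, so that all non-equatorial orbits are dense and the flow is ergodic on $S\s$ with respect to Liouville measure. The only closed geodesics are the equator traversed in each of its two directions, and vanishing topological entropy follows because $\phi_{F_\alpha}^t$ is smoothly conjugate away from the equators to a linear flow on a $2$-torus, cf.\ Remark \ref{entropy poincare map}.

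\emph{Part (2).} A reversible variant requires a symmetric modification. One natural choice is a smoothing of the piecewise-Randers metric $\sqrt{g_0(v,v)} + \alpha\,|\eta(v)|$, or an $(\alpha,\beta)$-Finsler metric $F(v) := \sqrt{g_0(v,v)}\cdot\varphi\bigl(\eta(v)/\sqrt{g_0(v,v)}\bigr)$ with $\varphi$ smooth, even, convex, and close to $1$. The resulting $F$ is reversible and smoothly close to $\sqrt{g_0}$, and the flip $\iota(v) := -v$ commutes with $\phi_F^t$. The subset $\{\eta = 0\}\subset S\s$ is $\iota$-invariant, and in the limiting piecewise case its complement is $\phi_F^t$-invariant, splitting into two sides $U_0$ and $\iota(U_0)$ on each of which the flow mirrors the Randers case from Part (1). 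For the smoothed version, these sides remain (essentially) invariant up to a thin set near $\{\eta=0\}$, yielding ergodic components $E_0\supset U_0$ and $E_1 = \iota(E_0)$. Parameter tuning makes $E_0\cup E_1$ of arbitrarily large Liouville measure in $S\s$.

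\emph{Main obstacle.} The central technical challenge is to identify the reduced dynamics on the quotient $\s$ as a rotation with irrational rotation number for a dense set of parameters, and then to upgrade the resulting minimality of the skew product to full Liouville ergodicity on $S\s$. In Part (2) the additional difficulty is to control the smoothing near $\{\eta = 0\}$ so that the two regions remain essentially invariant and non-further-decomposable, and to quantify the measure of the exceptional locus as a function of the parameters. Vanishing entropy in both cases is a direct corollary of the skew-product conjugacy with an irrational rotation.
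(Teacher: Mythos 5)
The central step in your proposal is false: the Randers metric $F_\alpha = \sqrt{g_0} + \alpha\,\eta$ is \emph{completely integrable} for every $\alpha$, because its dual Hamiltonian $H_\alpha = H_0 + \alpha H_1$ commutes with $H_1$. The function $H_1$ restricted to the unit cosphere bundle is a nontrivial first integral, so $S\s$ foliates into invariant $2$-tori (the level sets of $H_1$), and the flow on each regular torus is linear. Consequently $\phi_{F_\alpha}^t$ is never ergodic on $S\s$, no matter how $\alpha$ is tuned. What irrationality of $\alpha$ buys you is precisely Katok's classical observation that the Randers metric then has only the two equatorial closed geodesics -- but ``only two closed geodesics and dense orbits on each invariant torus'' is far from ``ergodic on $S\s$''. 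Your attempt to ``upgrade the resulting minimality of the skew product to full Liouville ergodicity'' cannot succeed: the invariant latitude tori are a measurable obstruction, and in fact the skew product is not even minimal.

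The example Katok actually produces is a further $C^k$-small perturbation of the Randers metric, not the Randers metric itself. This is the content of Theorem A of \cite{katok}, reproduced in the paper as Theorem \ref{katok thm A}: given the two commuting periodic Hamiltonian flows, one can construct a positively homogeneous, fiberwise convex Hamiltonian $H$ that coincides with $H_0 + \alpha H_1$ only on $\partial U \cup U_0$ (the degenerate set where the two Hamiltonian vector fields are dependent), agrees with $H_0$ up to $\varepsilon$ in $C^k$, but elsewhere destroys the second integral so delicately that the resulting flow is ergodic on each energy level minus $U_0$, has no closed orbits there, and (by subexponential growth of periodic orbits, cf.\ Corollary~4.4 of \cite{katok1}) still has zero topological entropy. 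The paper then obtains Part~(1) by taking $U = T^*\s - 0^*_\s$ (so $U_0$ is the lift of the equator), and Part~(2) by working with a rotational metric on a cylinder, applying Theorem~\ref{katok thm A} on a cone $U_{a_0}$ about the $dx_1$-direction, extending to all of $T^*\CC$ via the cut-off in Lemma~\ref{periodic flow}, and then symmetrizing by reflection $\xi\mapsto -\xi$ on the half $\{H_1\le 0\}$ where the metric already agrees with $H_0$. Your Part~(2) inherits the same gap: symmetrizing an integrable metric gives another integrable metric, not one with a large ergodic component. The missing idea throughout is Katok's ergodizing perturbation beyond the Randers/rotational-isometry family.
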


Note that the closeness of the Finsler metric to the round metric implies that the flag curvatures of $F$ are positive and in particular, any geodesic in the examples in Theorem \ref{thm sphere katok} possesses conjugate points. Recall that, if $c:\R\to\s$ is an $F$-geodesic, then for $t_0<t_1$ the points $c(t_0),c(t_1)$ are said to be {\em conjugate along $c$}, if writing $\pi:S\s\to\s$ for the bundle projection and $V_v = \ker d\pi(v) \subset T_vS\s$ for the vertical line bundle, we have for the geodesic flow
\[ d\phi_F^{t_1-t_0}(\dot c(t_0)) V_{\dot c(t_0)} = V_{\dot c(t_1)}. \]
We will prove the following theorem in Section \ref{section sphere}, which shows that example (2) of a reversible Finsler metric in Theorem \ref{thm sphere katok} is in some sense optimal. 

\begin{thm}\label{thm sphere}
 Let $F$ be a reversible Finsler metric on $\s$, possessing at least two closed geodesics (non-equal images in $\s$). Assume moreover, that every geodesic in $(\s,F)$ has a pair of conjugate points along itself. Then, if $\phi_F^t$ has a dense geodesic in $S\s$, we have
 \[ \h(\phi_F^t,S\s)>0. \]
\end{thm}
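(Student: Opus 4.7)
I would argue by contradiction: assume $\h(\phi_F^t,S\s)=0$ while $\phi_F^t$ admits a dense orbit. The plan is a classical reduction, via a Birkhoff annulus, of the geodesic flow on $\s$ to the iteration of a two-dimensional area-preserving map, followed by an application of a rigidity theorem of Franks--Handel for zero-entropy area-preserving surface diffeomorphisms.

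\emph{Step 1: Birkhoff annulus as a global \Poincare section.} By Lusternik--Schnirelmann--Birkhoff one may take one of the two closed geodesics, say $\g$, to be simple; it separates $\s$ into two open Jordan domains $D_+, D_-$. Form
\[
A \;:=\; \{v\in S\s : \pi(v)\in\g,\ v\text{ points into }D_+\},
\]
whose boundary is the pair of closed orbits $\dot\g$ and $-\dot\g$, distinct because $F$ is reversible. The conjugate-points hypothesis is used here to show that $A$ is a \emph{global} section with uniformly bounded return times: if some geodesic were trapped in a closed hemisphere for all future time, its non-empty $\omega$-limit in $S\s$ would be compact and invariant, and applying \Poincare recurrence together with a Jacobi-field / index-form estimate in the spirit of Birkhoff to a limit geodesic in this set would contradict the assumption that every geodesic has a pair of conjugate points in finite time.

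\emph{Step 2: Reduction to a surface diffeomorphism.} Let $f:\Int A\to\Int A$ be the first-return map. Then $f$ is a $C^\infty$ area-preserving diffeomorphism of the open annulus, with respect to the area form inherited from the Liouville form on $S\s$, and by Remark \ref{entropy poincare map} one has $\h(f)=0$. The assumed dense orbit of $\phi_F^t$ meets $A$ densely, so $f$ is topologically transitive in $\Int A$.

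\emph{Step 3: Extra periodic orbit and Franks--Handel.} The second closed geodesic $\g'$, with image distinct from $\g$, is a closed $\phi_F^t$-orbit; by the global section property it must cross $A$, and each pair of successive transverse intersections of $\g'$ with $\g$ contributes a periodic orbit of $f$ in $\Int A$. Compactify $\bar A$ to $\s$ by collapsing each boundary circle to a point, which is legitimate because $f$ restricts to the identity on $\partial A$ (each boundary circle is itself the closed orbit $\pm\dot\g$), and the resulting map is smooth after a Floquet / normal-form adjustment in a neighborhood of each boundary orbit. One thereby obtains a $C^\infty$ area-preserving zero-entropy diffeomorphism $\tilde f$ of $\s$ that is still topologically transitive and now carries at least three distinct periodic orbits (the two collapsed boundary circles and the one produced by $\g'$). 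This configuration is ruled out by the Franks--Handel structure theorem for zero-entropy area-preserving $C^\infty$ diffeomorphisms of $\s$, yielding the desired contradiction.

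\emph{Main obstacle.} The technical heart of the argument is Step 1: under strictly positive flag curvature the global-section property of the Birkhoff annulus is classical, but here we are given only the \emph{qualitative} hypothesis of conjugate points along each geodesic, so one must combine $\omega$-limit compactness, \Poincare recurrence, and Jacobi-field / index-form estimates both to rule out geodesics trapped in an open hemisphere and to obtain a \emph{uniform} bound on return times. A secondary subtlety is verifying that the compactified map on $\s$ in Step 3 is genuinely smooth, which amounts to a careful linearization of $\phi_F^t$ in a neighborhood of the two boundary closed orbits $\pm\dot\g$.
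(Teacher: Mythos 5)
Your outline follows the paper's route --- Birkhoff annulus, smooth closed-annulus first-return map, collapse boundary circles to get a diffeomorphism of $\s$ in $\Diff(\s,P,\mu)$, extra interior periodic orbit from the second closed geodesic, then Franks--Handel --- so the architecture is right. But the place where you wave your hands hardest is precisely where the real work is, and one of your intermediate claims is not correct.

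The most serious gap is Step 3. The Franks--Handel results (Theorems \ref{thm 1.2 Fr/Ha} and \ref{thm 1.4 Fr/Ha}) do \emph{not} directly forbid a zero-entropy, area-preserving, transitive diffeomorphism of $\s$ with three periodic points: they decompose the weakly free disc recurrent set into a countable family $\A$ of invariant open annuli and assert that the rotation number on the annular compactification of each $U\in\A$ is continuous and (given at least three fixed points) non-constant. A priori the whole dense orbit could live inside a single such annulus, so transitivity is not immediately contradicted. The paper's actual contradiction goes through the rotation number: take $U\in\A$ meeting the dense orbit, observe that the continuous non-constant $\rho_{h_U}$ has uncountably many level sets, pick levels of measure zero, and produce disjoint, invariant, open subsets with closures of controlled measure, which is incompatible with density. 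This is the heart of the proof, and it is entirely absent from your Step 3. Relatedly, you quietly replace the ``extra periodic orbit'' of period $q\geq 1$ by a ``fixed point''; the Franks--Handel hypothesis is $\card\Fix(\psi)\geq 3$, so one must pass to $\psi^q$ and then redo the measure count with $q$-translates (this is exactly why the paper writes $\cup_{i=0}^{q-1}\psi^i(\overline{U_0})$ and compares against $q+1$ disjoint sets).

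Two smaller issues. First, your justification for smooth collapsing rests on the assertion that ``$f$ restricts to the identity on $\partial A$''; this is false in general --- the first-return map on a boundary closed orbit is typically a nontrivial rotation of that circle determined by the period. What you actually need is only that each boundary circle is $\phi$-invariant (hence gives a fixed point after collapsing) and that $\phi$ extends $C^\infty$ to $A$; the paper devotes Lemmas \ref{birkhoff smooth}, \ref{lemma umberto} and Lemma \ref{lemma G smooth} in the appendix to exactly this extension, via the implicit function theorem for the return time near $\partial A$, not via a Floquet normal form. Second, your Step 1 argument (``$\omega$-limit compactness $+$ \Poincare recurrence $+$ Jacobi-field estimates'') is a different, and much vaguer, route than the paper's. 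The paper rules out trapped geodesics by a Hedlund/Bangert-type argument: a geodesic disjoint from $c$ together with $c$ bounds a locally geodesically convex annulus, inside $k$-fold covers of which curve shortening produces minimizing closed geodesics, and the conjugate-points hypothesis forces these to be neither boundary component nor multiples of $c$, eventually producing a closed geodesic without conjugate points --- contradiction. It is not clear that your recurrence sketch closes; at minimum it would need to be spelled out before being credited.
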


Hence, under the condition on conjugate points and as long as we have enough closed geodesics, dense geodesics -- and in particular ergodicity of the geodesic flow in $S\s$ -- imply the existence of hyperbolicity in the dynamical system $\phi_F^t:S\s\to S\s$, cf. Corollary 4.3 in \cite{katok1}.

Note that, in order to prove Theorem \ref{thm sphere}, we will use the Birkhoff annulus map. Hence our condition on the existence of at least two closed geodesics in $S\s$ implies by the celebrated result of J. Franks \cite{franks}, that we have in fact infinitely many closed geodesics, as in the unit tangent bundle the second closed geodesic becomes an interior periodic point in the Birkhoff annulus. Moreover, we remark that due to the results of J. Franks and M. Handel \cite{Fr-Ha}, which we apply in order to prove Theorem \ref{thm sphere}, there will under our assumptions be much more structure of $\phi_F^t:S\s\to S\s$, cf. Sections \ref{section Fr-Ha}, \ref{section sphere}.

It is well-known, that in Riemannian 2-spheres there always exist at least three (simple) closed geodesics \cite{grayson}, called the Lusternik-Schnirelmann geodesics. Hence, we obtain the following corollary of Theorem \ref{thm sphere}.

\begin{cor}\label{cor sphere}
 If $(\s,g)$ is a Riemannian 2-sphere with strictly positive curvature, then the existence of a dense geodesic in $S\s$ implies
 \[ \h(\phi_g^t,S\s)>0 \]
 and in particular the existence of a hyperbolic invariant set.
\end{cor}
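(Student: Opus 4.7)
The plan is to derive the corollary directly from Theorem \ref{thm sphere} by verifying that each of its three structural hypotheses is automatic in the positively curved Riemannian setting, and then to invoke a standard result of Katok for the hyperbolic invariant set.

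First I would observe that any Riemannian metric $g$ on $\s$ is a special reversible Finsler metric via $F(v) = \sqrt{g(v,v)}$, so Theorem \ref{thm sphere} is applicable in principle. Next, to supply the required two closed geodesics with distinct images in $\s$, I would invoke the Lusternik--Schnirelmann theorem (as quoted above via \cite{grayson}), which guarantees at least three simple closed geodesics on any Riemannian 2-sphere. Any two of these furnish the hypothesis on the number of closed geodesics.

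The third hypothesis -- that every geodesic admits a pair of conjugate points along itself -- is the one that really uses strict positivity of curvature. Since $\s$ is compact and the Gaussian curvature $K$ is continuous and strictly positive, we have $K \geq K_0 > 0$ for some constant $K_0$. A standard Sturm/Rauch comparison argument (Bonnet--Myers) then shows that along any unit-speed geodesic $c:\R\to \s$, the first conjugate point to $c(0)$ occurs at parameter at most $\pi/\sqrt{K_0}$. This yields $0<t_1\leq \pi/\sqrt{K_0}$ with $c(0), c(t_1)$ conjugate along $c$. Translating the classical Jacobi-field definition into the dynamical formulation used before Theorem \ref{thm sphere} gives exactly $d\phi_g^{t_1}(\dot c(0))V_{\dot c(0)} = V_{\dot c(t_1)}$, since vertical vectors correspond to Jacobi fields vanishing at the base point.

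With all three hypotheses of Theorem \ref{thm sphere} verified and the assumed existence of a dense geodesic in $S\s$, the theorem yields $\h(\phi_g^t, S\s) > 0$. The existence of a hyperbolic invariant set then follows from Katok's theorem (Corollary 4.3 in \cite{katok1}) that any $C^{1+\alpha}$ flow on a compact manifold with positive topological entropy carries a horseshoe-type hyperbolic set; the metric $g$ being smooth, this applies. The only step requiring care is the identification of the Riemannian notion of conjugate points with the vertical-bundle condition appearing in the statement of Theorem \ref{thm sphere}; everything else is a direct citation.
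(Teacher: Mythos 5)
Your proposal is correct and follows the paper's intended proof: it verifies the hypotheses of Theorem \ref{thm sphere} exactly as the paper does (reversibility is automatic for Riemannian metrics, the Lusternik--Schnirelmann/Grayson geodesics supply the two distinct closed geodesics, and strict positivity of curvature gives conjugate points along every geodesic by Bonnet--Myers/Sturm comparison), then cites Katok's Corollary 4.3 for the hyperbolic invariant set. Your added remark on identifying the classical Jacobi-field notion of conjugate points with the vertical-bundle condition is a correct elaboration of what the paper leaves implicit.
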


Note that it is an open problem, whether there exist positively curved Riemannian 2-spheres with an ergodic geodesic flow. In this sense, Corollary \ref{cor sphere} shows that such examples would necessarily have chaotic behavior arising from hyperbolicity.

By the results of V. J. Donnay \cite{donnay}, there exist Riemannian metrics on $\s$ whose geodesic flow is ergodic in $S\s$ with a lot of negative curvature -- but these examples also have positive topological entropy. It is thus possible that Corollary \ref{cor sphere} also holds without the assumption of positive curvature, which, however, is a topic for future research.

\subsection{The 2-torus}

In this subsection we discuss the 2-torus $\T=\R^2/\Z^2$. The following theorem on geodesic flows on $\T$ with vanishing topological entropy is due to E. Glasmachers and G. Knieper \cite{glasm1}, \cite{glasm2} with an earlier version for monotone twist maps given by S. Angenent in \cite{angenent}. The letter $\pi$ stands in this paper for the canonical projections of tangent bundles, e.g. $\pi:T\T\to\T$. We write $c_v(t) = \pi \phi_F^tv$ for the unique geodesic with $\dot c_v(0)=v$. Note that then $\dot c_v(t)=\phi^t_Fv$.

\begin{thm}[Glasmachers, Knieper]\label{thm torus htop=0}
Let $F$ be a reversible Finsler metric on $\T$. If $\h(\phi_F^t)=0$, then for all $\rho\in S^1$ there exist $\phi_F^t$-invariant Lipschitz graphs $\Gam_\rho^-, \Gam_\rho^+ \subset S\T$ over $\T$ (meaning that $\pi|_{\Gam_\rho^\pm}:\Gam_\rho^\pm\to\T$ is a bi-Lipschitz homeomorphism) with the following properties.
\begin{enumerate}
 \item There exists a constant $D \geq 0$ depending only on $F$, such that for $v\in \Gam_\rho^\pm$, the lifted geodesics $\widetilde{c_v}(t)\in\R^2$ lie at distance at most $D$ from the straight euclidean line through $\widetilde{c_v}(0)$ with direction $\rho$ and escape to infinity along this line ($\widetilde{c_v}(t)$ moves in the direction $\pm\rho$, as $t\to \pm \infty$).
 
 \item If $\rho$ has irrational slope, then $\Gam_\rho^-=\Gam_\rho^+=:\Gam_\rho$.
 
 \item If $\rho$ has rational slope, then the intersection $\cap_{-,+}\Gam_\rho^\pm$ consists precisely of the velocities of the shortest closed geodesics in the prime homotopy class in $\R_{>0}\rho \cap\Z^2$. In particular, $\cap\Gam_\rho^\pm \neq \emptyset$. Moreover, each orbit in $\cup\Gam_\rho^\pm-\cap\Gam_\rho^\pm$ is heteroclinic between the two closest periodic geodesics in $\cap\Gam_\rho^\pm$. Here $\Gam_\rho^+$ is chosen in such a way that $\widetilde{c_v}(t)$ with $v\in\Gam_\rho^+$ is asymptotic in $\R^2$ to the right periodic geodesic as $t\to-\infty$ and to the left periodic geodesic as $t\to\infty$ (with respect to the orientation given by $\rho$); $\Gam_\rho^-$ has the opposite behavior.
 
 \item All orbits in $S\T - (\cup_{\rho\in S^1} \cup_{-,+} \Gam_\rho^\pm)$ are enclosed between the two graphs $\Gam_\rho^\pm$ with the same rational direction $\rho$ and when lifted to $\R^2$ tend to $\pm\infty$ along straight lines of direction $\pm\rho$, as $t\to\pm\infty$.
\end{enumerate}
\end{thm}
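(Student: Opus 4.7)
The plan is to combine Hedlund-Morse theory of minimal geodesics on $\T$ with Aubry-Mather / Bangert-type arguments for invariant graphs, using the hypothesis $\h(\phi_F^t)=0$ to rule out ``exotic'' recurrent behaviour inside the rational gaps. First I would lift $F$ to the universal cover $\R^2$ and apply Hedlund's theorem to produce, for each direction $\rho\in S^1$, a family of \emph{minimal} (class-A) geodesics whose lifts stay within uniformly bounded distance $D=D(F)$ of the Euclidean line with direction $\rho$ (this gives the constant $D$ of the theorem). The fundamental non-crossing property --- two distinct minimal geodesics in $\R^2$ with the same asymptotic direction meet transversely at most once --- follows directly from global minimality of length on bounded intervals and is the main technical tool throughout.

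For an irrational $\rho$, I would argue that the velocity vectors of all minimal geodesics with direction $\rho$ project to a single Lipschitz graph $\Gam_\rho\subset S\T$, which forces $\Gam_\rho^-=\Gam_\rho^+$. Non-crossing combined with irrationality of the rotation direction yields a cyclic (hence total) order on the lifts in the direction perpendicular to $\rho$; uniform convexity of $F^2$ then translates this order into a Lipschitz estimate in the spirit of Birkhoff's invariant-graph theorem. For a rational $\rho$ corresponding to a prime class $(p,q)\in\Z^2$, I would first minimize length in the corresponding free homotopy class on $\T$ to obtain a finite, cyclically ordered family of shortest closed geodesics $\g_1,\dots,\g_k$; then between any two neighbours I would obtain heteroclinic minimal geodesics by a standard diagonal variational argument, taking limits of length-minimizers on intervals $[-T,T]$ with endpoints anchored on the two neighbours as $T\to\infty$. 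The two possible heteroclinic types (asymptotic to the right neighbour at $-\infty$ and to the left at $+\infty$, or vice versa) assemble into the two graphs $\Gam_\rho^\pm$.

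Any orbit $\phi_F^tv$ not lying on some $\Gam_\sig^\pm$ is still bounded in a transverse direction (by a stable-norm / Busemann argument comparing with minimizing geodesics in every nearby direction), and its $\al$- and $\om$-limit sets are contained in $\bigcup_{\sig,\pm}\Gam_\sig^\pm$. Since the $\Gam_\sig$ with irrational $\sig$ are full Lipschitz graphs over $\T$, such a ``wandering'' orbit cannot cross any of them, so it must be enclosed between two graphs $\Gam_\rho^\pm$ of a single rational direction $\rho$, which yields assertion (4).

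The hard part is the zero-entropy rigidity inside a single rational gap and, to a lesser extent, in the possible Mather Cantor set of an irrational direction: one must exclude any non-heteroclinic recurrent orbit between two neighbouring shortest closed geodesics, and any Birkhoff recurrence class strictly larger than the proposed invariant graph. This is precisely where $\h(\phi_F^t)=0$ enters --- a transverse intersection of stable and unstable manifolds of two neighbouring closed geodesics, or a non-trivial Denjoy-type complement of a minimal graph carrying non-minimal dynamics, would produce a Smale horseshoe in a suitable \Poincare section (cf.\ Remark \ref{entropy poincare map}) and hence positive topological entropy, contradicting the hypothesis.
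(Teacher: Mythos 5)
Note first that the paper itself does not prove Theorem~\ref{thm torus htop=0}: it is a cited result of Glasmachers and Knieper \cite{glasm1}, \cite{glasm2}, with Angenent \cite{angenent} having proved the monotone-twist-map version. So there is no in-paper proof to compare against; what can be assessed is whether your sketch plausibly reconstructs the argument of those references.

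Your outline (Hedlund--Morse class-A geodesics, non-crossing, shortest closed geodesics plus heteroclinics for rational directions, zero entropy to kill horseshoes) is the right circle of ideas, but there is a genuine gap in how you handle the irrational case, and it is not a side issue. You write that ``the velocity vectors of all minimal geodesics with direction $\rho$ project to a single Lipschitz graph $\Gam_\rho$.'' That is only true over a \emph{closed subset} of $\T$: for a general reversible Finsler metric the set of minimal geodesics with a fixed irrational direction projects to either the whole circle or a Cantor set, and in the Cantor case the gap orbits fill in only a countable set, so the union is not a Lipschitz graph over all of $\T$ and, more to the point, the obvious interpolation across the gaps is not $\phi_F^t$-invariant. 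The assertion that $\Gam_\rho$ is a $\phi_F^t$-invariant Lipschitz graph over the \emph{entire} torus is exactly Angenent's theorem that zero topological entropy forces the Aubry--Mather sets of irrational rotation number to be full invariant circles --- it is the heart of the matter, not the safety check you relegate to the last paragraph. Without this you cannot even state item (2) correctly, and item (4) (all remaining orbits are trapped in rational gaps) collapses because orbits could live inside the Denjoy gaps of an irrational Cantor set.

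The same underestimation affects the rational case and item (4). That the heteroclinic orbits between neighbouring shortest closed geodesics assemble into \emph{precisely two} graphs $\Gam_\rho^\pm$ with the stated orientation behaviour, and that nothing else (transverse homoclinic or heteroclinic connections, recurrent non-heteroclinic orbits in the Aubry gap) occurs between them, is again a consequence of the zero-entropy hypothesis combined with the Aubry--Mather/Angenent analysis of the Peierls barrier; ``this would produce a Smale horseshoe'' is the correct slogan, but making it into a proof is substantial work (this is, in effect, the content of \cite{angenent}). Your sketch correctly identifies the ingredients and where zero entropy should enter, but it presents the hardest step as routine and, for the irrational case, invokes the conclusion in the wrong logical order.
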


For intuition about the invariant sets in Theorem \ref{thm torus htop=0}, cf. Figure \ref{fig htop=0}.

The following terminology is motivated by the often found presence of elliptic closed geodesics in the complement of the graphs in Theorem \ref{thm torus htop=0}.

\begin{defn}\label{def elliptic tubes}
Let $F$ be a reversible Finsler metric on $\T$ with $\h(\phi_F^t)=0$. If $E$ is a connected component of $S\T - (\cup_{\rho\in S^1} \cup_{-,+} \Gam_\rho^\pm)$, enclosed by two graphs $\Gam_\rho^\pm$ of some rational direction $\rho\in S^1$, then we call $E$ an \emph{elliptic tube (of direction $\rho$)}.
\end{defn}

\begin{figure}[!htb]\centering
\includegraphics[scale=1.0]{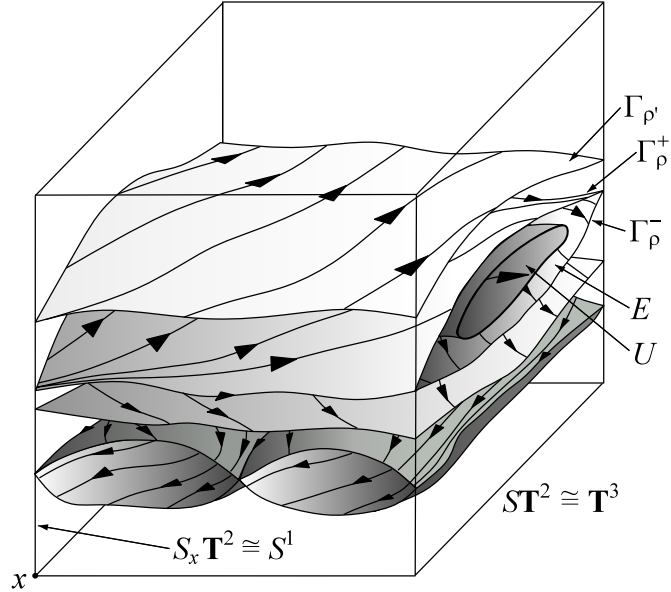}
\caption{The invariant graphs in the unit tangent bundle $S\T\cong \mathbb{T}^3$ occurring in Theorem \ref{thm torus htop=0}. The horizontal plane can be thought of as the base $\T$, $\pi$ being the vertical projection. Moreover, one can see an elliptic tube $E$ enclosed by two graphs $\Gam_\rho^\pm$ of direction $\rho=(1,0)$, containing a $\phi_F^t$-invariant subtube $U$, where $\phi_F^t|_U$ might be ergodic. \label{fig htop=0}}
\end{figure}

What remains open in Theorem \ref{thm torus htop=0} is any description of the dynamics of $\phi_F^t$ in the elliptic tubes $E\subset S\T$, besides the fact that they move to $\pm\infty$ in bounded distance from euclidean lines. Our first theorem shows that, even if $\h(\phi_F^t)=0$, elliptic tubes can contain complicated dynamical behavior of $\phi_F^t$. The example relies on the construction due to A. Katok \cite{katok}, which also led to the examples in Theorem \ref{thm sphere katok}. Recall that ergodicity is meant with respect to the Liouville measure in $S\T$.

\begin{thm}\label{thm torus katok}
There exist reversible Finsler metrics $F$ on $\T$ with vanishing topological entropy $\h(\phi_F^t)=0$, having an elliptic tube $E\subset S\T$ containing a $\phi_F^t$-invariant, open subtube $U\subset E$, such that $\phi^t_F|_U$ is ergodic. Moreover, the measure of $E-U$ can be made arbitrarily small.
\end{thm}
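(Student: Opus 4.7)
The approach mirrors Katok's construction from \cite{katok} that produced the ergodic examples on $\s$ in Theorem \ref{thm sphere katok}, adapted so that the perturbation is localized inside a single elliptic tube on the torus. The plan is to start from a reversible, completely integrable, zero-entropy Finsler metric $F_0$ on $\T$ possessing an elliptic tube $E\subset S\T$ on which the geodesic flow admits a commuting circle-action symmetry, and then modify $F_0$ only inside a compact subset of $E$ by a Katok-type perturbation, producing a reversible Finsler metric $F$ that coincides with $F_0$ outside the perturbation locus but whose geodesic flow is ergodic on a $\phi_F^t$-invariant open subtube $U\subset E$ of Liouville volume arbitrarily close to that of $E$.

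For the base metric take a Riemannian metric of the form $g_0=a(y)dx^2+dy^2$ on $\T=\R^2/\Z^2$, with $a:\R/\Z\to\R_{>0}$ smooth, attaining its minimum value at a pair of non-degenerate minima $y_1\neq y_2$ and having a non-degenerate maximum between them. The $x$-translation symmetry provides the conserved Clairaut integral $I(v)=a(y)v_x$, so the geodesic flow of $F_0(v):=\sqrt{g_0(v,v)}$ is completely integrable, reversible, and of vanishing topological entropy. The two shortest closed geodesics in the prime homotopy class $(1,0)$ are the unstable periodic orbits along $\{y=y_j\}$, $j=1,2$; between them the pendulum-type $(y,\dot y)$-dynamics produces a stable closed geodesic along the maximum of $a$ together with a family of oscillating orbits surrounding it. The corresponding unit vectors with $v_x>0$ form an invariant open set $E\subset S\T$ which is an elliptic tube of direction $(1,0)$ in the sense of Definition \ref{def elliptic tubes}, bounded by the two invariant Lipschitz graphs $\Gam^{\pm}_{(1,0)}$ from Theorem \ref{thm torus htop=0}, consisting of the two shortest $(1,0)$-geodesics together with the heteroclinic orbits between them.

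Inside $E$ introduce action-angle coordinates $(I,\theta_1,\theta_2)\in J\times\mathbb{T}^2$ adapted to the $x$-translation symmetry, in which $\phi_{F_0}^t$ takes the linear form $(\dot I,\dot\theta_1,\dot\theta_2)=(0,\omega_1(I),\omega_2(I))$ and the circle action is $\theta_1\mapsto\theta_1+s$. Following \S 2 of \cite{katok}, modify the associated Hamiltonian on $T^*\T$ by a smooth, compactly supported, $S^1$-invariant and $v\mapsto -v$-invariant perturbation that combines a fibrewise reparametrization of the orbits with a small coupling between nearby level sets $\{I=\mathrm{const}\}$. Katok's analysis then yields a positive-measure ergodic component $U$ inside $E$, and by scaling down and cutting off the perturbation one makes $\vol(E\setminus U)$ arbitrarily small. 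The inverse Legendre transform converts the perturbed Hamiltonian into a Finsler metric $F$ which agrees with $F_0$ outside the support of the perturbation, so that $E$ remains an elliptic tube for $\phi_F^t$ bounded by the unchanged invariant graphs $\Gam^{\pm}_{(1,0)}$.

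Reversibility and the strong fibrewise convexity of $F^2$ are preserved by choosing the perturbation $C^2$-small and $v\mapsto -v$-symmetric. For the vanishing of $\h(\phi_F^t)$ observe that outside a compact subset of $E$ the flow $\phi_F^t$ equals the integrable flow $\phi_{F_0}^t$, while inside $E$ it is a smooth reparametrization of $\phi_{F_0}^t|_E$ by a positive function bounded away from $0$ and $\infty$; since zero topological entropy is preserved under such reparametrizations, $\h(\phi_F^t)=\h(\phi_{F_0}^t)=0$, which in turn legitimizes the appeal to Theorem \ref{thm torus htop=0} for the perturbed metric. The main technical obstacle is the central step: arranging Katok's perturbation so that the perturbed flow acquires a positive-measure ergodic component consuming almost all of $E$ while remaining the geodesic flow of a reversible Finsler metric. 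This is precisely the mechanism developed in \cite{katok}; the localization to a single tube on the torus introduces only the (routine) additional requirement that the perturbation be supported in a compact subset of $E\cong J\times\mathbb{T}^2$, which is unproblematic.
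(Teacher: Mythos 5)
Your overall strategy---localize a Katok-type perturbation inside a single elliptic tube of a rotationally-symmetric torus metric and argue reversibility and zero entropy afterwards---is the right skeleton and matches the paper's approach in spirit. But the proposal glosses over exactly the three points where the real work lies, and in each case what you write would fail.

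First, periodicity of the base flow. Katok's Theorem A (Theorem \ref{katok thm A}) requires \emph{both} commuting Hamiltonian flows $\phi_{H_0}^t$ and $\phi_{H_1}^t$ to be $\emph{periodic}$ on the cone $U$. Your base metric $g_0=a(y)dx^2+dy^2$ with a generic $a$ having two non-degenerate minima and an intermediate maximum yields a pendulum-type integrable system inside $E$ whose geodesic flow is quasi-periodic with varying period on the Liouville tori, not periodic. Katok's mechanism then simply does not apply. The paper handles this by arranging the torus metric $f^2\skp$ to coincide with the round-sphere profile $f_0^2\skp$ on a strip, so that the geodesic flow restricted to $U_{a_0}$ is $2\pi$-periodic (Lemma \ref{periodic flow}). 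That is the essential preparatory step you omit.

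Second, reversibility. You assert that reversibility is preserved by "choosing the perturbation $C^2$-small and $v\mapsto -v$-symmetric." This is incorrect: Katok's perturbation is built from $H_0+\alpha H_1$ with $H_1(\xi)=\xi_1$, which is \emph{odd} in $\xi$, and this oddness is not incidental but essential to the ergodicity mechanism. Forcing the perturbation to be even under $\xi\mapsto-\xi$ would remove precisely the term that tilts the invariant tori and produces a dense orbit. The paper instead produces a \emph{non-reversible} $H$ first and then glues $H$ on $\{H_1\geq 0\}$ with its pullback under $\xi\mapsto-\xi$ on $\{H_1<0\}$; this is consistent because those half-spaces are invariant under $\phi_H^t$ in the relevant region, and because $H=H_0$ (already reversible) near $\{H_1=0\}$ by the cutoff in Lemma \ref{periodic flow}. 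Without some version of this gluing you do not get a reversible $F$.

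Third, zero entropy. You argue that inside $E$ the perturbed flow "is a smooth reparametrization of $\phi_{F_0}^t|_E$ by a positive function bounded away from $0$ and $\infty$," and deduce $\h(\phi_F^t)=0$ from the integrable base. But reparametrizations preserve the orbit foliation, hence all invariant tori, and can never produce ergodicity on a positive-measure set; so if Katok's perturbation gave only a reparametrization you would have no ergodic $U$ at all. In fact the perturbed flow changes the orbit structure. The correct argument (used in the paper) is that on $U_{a_0}-U_0$ the perturbed flow is ergodic and has no closed orbits, so by Katok's periodic-orbit growth estimate (Corollary 4.4 of \cite{katok1}) its topological entropy vanishes, while outside $U_{a_0}$ the flow is completely integrable and Paternain's theorem gives zero entropy.

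Each of these three points is where the content of the theorem lives, and each is handled in your write-up by an assertion that is actually false. The construction can be repaired along the lines above, but as written the proof does not go through.
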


On the other hand, our next theorem shows that complicated behavior in all of $E$ is excluded. Philosophically speaking, the example in Theorem \ref{thm torus katok} is very degenerate in the center of the elliptic tube, while at its boundary the heteroclinics make up for a ``twist'', prohibiting ergodicity. We write $\O(\phi_F^t,v) = \{\phi^t_Fv:t\in\R\}\subset S\T$ for the orbit of $v$ and $\Clos A$ for the closure of a set $A$.

\begin{thm}\label{thm torus}
Let $F$ be a reversible Finsler metric on $\T$ with $\h(\phi_F^t)=0$ and suppose that $E\subset S\T$ is an elliptic tube. Then we have the following.
 \begin{enumerate}
  \item The set of vectors $v\in E$ with $\Clos\O(\phi_F^t,v)\cap\partial E \neq\emptyset$ has zero Liouville measure.
  
  \item There exists no orbit of $\phi_F^t$, which is dense in $E$. In particular, the restriction $\phi_F^t|_E$ is not ergodic with respect to the Liouville measure in $E\subset S\T$.
 \end{enumerate}
\end{thm}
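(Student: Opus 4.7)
My plan is to reduce to the Poincaré first-return map on a compact annulus transverse to the flow in $\Clos E$, and to exploit the rigid boundary dynamics from Theorem~\ref{thm torus htop=0}(3) together with the zero-entropy hypothesis via Franks-Handel-type results on zero-entropy area-preserving surface maps. Let $\rho\in S^1$ be the rational direction of $E$ and fix a simple closed curve $\gamma\subset\T$ transverse to $\rho$; set $\bar\Sigma:=\pi^{-1}(\gamma)\cap\Clos E$. By Theorem~\ref{thm torus htop=0}(4), orbits in $\Clos E$ stay at bounded distance from a Euclidean line of direction $\rho$ in $\R^2$, so the first-return time of $\phi_F^t$ to $\bar\Sigma$ is uniformly bounded; hence $\bar\Sigma$ is a compact annulus, the first-return map $f:\bar\Sigma\to\bar\Sigma$ is a homeomorphism preserving the area form induced by the Liouville measure, and by Remark~\ref{entropy poincare map} satisfies $\h(f)=0$. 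By Theorem~\ref{thm torus htop=0}(3), after replacing $f$ with a suitable iterate $f^N$, each boundary circle of $\bar\Sigma$ consists of the same finite set of fixed points $p_1,\dots,p_k$ (coming from the shortest closed geodesics in the prime homotopy class of $\rho$) together with heteroclinic arcs joining consecutive $p_j$'s, with \emph{opposite} orientations on the two boundaries.

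For part~(1), I would argue as follows: if $w\in\Int\bar\Sigma$ satisfies $\Clos\O(w,f)\cap\partial\bar\Sigma\ne\emptyset$, then since the $\omega$- and $\alpha$-limits of every boundary point are among the $p_j$'s, the closed invariant set $\Clos\O(w,f)$ must contain some $p_j$. The Franks-Handel analysis of zero-entropy area-preserving surface maps then implies that the union of orbits whose closure contains a given boundary fixed point $p_j$ is contained in the one-dimensional stable--unstable ``web'' emanating from $p_j$, and hence has zero area; summing over the finitely many $p_j$ yields part~(1).

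For part~(2), any orbit of $\phi_F^t|_E$ dense in $E$ descends to an orbit of $f$ dense in $\bar\Sigma$, which in particular accumulates on every $p_j$ on both boundary circles. Combined with the opposite orientations of the boundary heteroclinics, a Smale--Birkhoff-type argument then produces a transverse heteroclinic intersection in $\Int\bar\Sigma$ and therefore a horseshoe for $f$, contradicting $\h(f)=0$. Hence no dense orbit exists, and in particular $\phi_F^t|_E$ is not ergodic with respect to the Liouville measure on $E$.

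The hardest step in both parts is the rigorous invocation of Franks-Handel: the $p_j$ are a priori only topological fixed points of a surface homeomorphism (not known to be hyperbolic), so the relevant stable/unstable objects must be extracted from the heteroclinic boundary structure together with the zero-entropy hypothesis, and one must verify that the opposite boundary orientations genuinely force transverse intersections in the interior. Matching the hypotheses of Franks-Handel's classification to this annular Poincaré map is the technical heart of the argument.
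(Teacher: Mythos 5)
Your high-level reduction (pass to a Poincar\'e return map on an annulus, invoke Franks--Handel) is in the right spirit, but the actual mechanism you propose does not match what Franks--Handel provides and contains a genuine gap in both parts. The fixed points $p_j$ on $\partial E$ are only topological fixed points of a $C^\infty$ area-preserving map with zero entropy; nothing forces them to be hyperbolic, so there is no stable/unstable ``web'' emanating from them, and there is no Smale--Birkhoff mechanism to run. In part (1), the claim that ``orbits whose closure contains $p_j$ lie in a one-dimensional web of zero area'' is simply not a theorem of Franks--Handel and is false for general non-hyperbolic fixed points (think of a fixed point surrounded by a positive-measure set of orbits spiraling toward it in a degenerate way). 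In part (2), the ``transverse heteroclinic intersection'' you want to produce has no meaning without differentiable stable/unstable manifolds, and you cannot manufacture one from the topological boundary structure plus zero entropy; moreover, if you could produce a horseshoe you would have proved positive entropy, which is the \emph{hypothesis} you are contradicting, so the logic would be circular unless the horseshoe were genuinely forced -- and it is not.

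The paper's actual argument avoids all of this by working with the rotation number on annular (prime-end) compactifications, which is the substance of Theorem~\ref{thm 1.4 Fr/Ha}: the rotation number of the compactified return map on each maximal $f$-invariant annulus $U\in\A$ is \emph{continuous everywhere}, and \emph{non-constant} once there are at least three fixed points. Two further ingredients you are missing: (a)~the section is taken slightly larger than $\Clos E$, out to two irrational invariant curves $\gam_\pm$ (Lemma~\ref{irrational boundaries}), which are then collapsed to points $p_\pm$ to produce a sphere homeomorphism $\psi\in\Diff(\s,P,\mu)$ with the heteroclinic boundary structure of $E$ now sitting in the \emph{interior} of the sphere -- this is what makes $\ge 3$ fixed points available and allows Lemma~\ref{annulus in island} to place the maximal invariant annuli strictly inside $\Int E$; (b)~for part~(1) the paper combines continuity of the prime-end rotation number (which forces $\rho_{h_U}(\tilde x)=0$ whenever the orbit of $x$ accumulates on a boundary fixed point, via Lemma~\ref{lemma prime ends}) with Franks' theorem from \cite{franks} that a fixed-point-free area-preserving homeomorphism of the open annulus has its zero-rotation-number set of measure zero. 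For part~(2), a dense orbit in some $U\in\A$ would force $\rho_{h_U}$ to be constant, contradicting Theorem~\ref{thm 1.4 Fr/Ha}(2). None of this requires hyperbolicity, horseshoes, or stable/unstable manifolds, and that is precisely the point of using the Franks--Handel rotation-number machinery rather than a Smale--Birkhoff argument.
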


Item (1) shows that almost every orbit in $E$ is bounded away from $\partial E$ by a positive constant. This indicates that there are large invariant sets in the interior of $E$, not touching the boundary $\partial E$. A possible picture would be a sequence of nested, invariant, closed tubes sitting inside the interior of the elliptic tube $E$. In general, we see by item (1), that for the Liouville measure $\mu_L$ in $S\T$ we have
\[ \lim_{\e\to 0} \mu_L(E-A_\e) = 0, \quad \text{where} \quad A_\e := \{ v\in S\T : d(\O(\phi_F^t,v), \partial E) \geq \e \} . \]
However, the sets $A_\e$ might a priori be quite exotic, opposed to the smoothly bounded invariant tubes, which we will find in the example in Theorem \ref{thm torus katok}.

\begin{remark}
Our results about geodesic flows in $\T$ are stated in terms of \emph{reversible} Finsler metrics. However, it might well be that Theorem \ref{thm torus} holds also in the non-reversible case. Apart from the fact that orbits outside the invariant graphs $\Gam_\rho^\pm$ tend to $\pm\infty$, Theorem \ref{thm torus htop=0} has been generalized to non-reversible Finsler metrics by the author in \cite{paper2}. The reversibility of $F$ is then only used to construct the \Poincare sections in Section \ref{section torus}. Note that general Finsler metrics can be used to describe the dynamics of Tonelli Lagrangian systems, cf. \cite{contreras}.
\end{remark}

\begin{remark}
The above results about geodesic flows in $\T$ remain true for {\em monotone twist maps} of the compact annulus. One can easily see that the example in Theorem \ref{thm torus katok} can be boiled down to a twist map. Moreover, for the proof of Theorem \ref{thm torus} we work with first-return maps to annuli, so we could have done the same with monotone twist maps.
\end{remark}

{\bf Structure of this paper.} In Section \ref{section katok}, we study rotational metrics on $\s$ and $\T$ and apply a result due to A. Katok from \cite{katok} to prove Theorem \ref{thm torus katok}. The main ingredient to prove Theorems \ref{thm sphere} and \ref{thm torus} are the results of J. Franks and M. Handel from \cite{Fr-Ha}, which we recall in Section \ref{section Fr-Ha}. In Sections \ref{section sphere} and \ref{section torus} we prove Theorems \ref{thm sphere} and \ref{thm torus}, respectively.

\section{Examples with large ergodic components}\label{section katok}

It will be convenient to work in the symplectic setting, as we are working with more than one Finsler metric. Let $X$ be a manifold with cotangent bundle $(T^*X,\om)$ endowed with its canonical symplectic form. Given a Finsler metric $F:TX\to [0,\infty)$, its {\em Legendre transform} $\LL : TX-0_X\to T^*X-0_X^*$ defined by $\LL(v) = \frac{1}{2} \frac{\partial F^2}{\partial v}(v)$ is a diffeomorphism. We can define the {\em dual Finsler metric} associated to $F$ by
\[ H := F\circ \LL^{-1}:T^*X\to [0,\infty) , \]
which has the same properties as a Finsler metric in Definition \ref{def finsler}. The Hamiltonian flow of $\frac{1}{2}H^2$ is conjugated via $\LL$ to the geodesic flow of $F$. Conversely, given a dual Finsler metric $H:T^*X\to [0,\infty)$, the analogously defined dual Legendre transform $\LL^*$ conjugates the Hamiltonian flow of $\frac{1}{2}H^2$ to the geodesic flow of the Finsler metric $H\circ (\LL^*)^{-1}$ and if $H$ was the dual Finsler metric associated to $F$ as above, then $H\circ (\LL^*)^{-1}=F$.

The origin of the examples in Theorems \ref{thm sphere katok} and \ref{thm torus katok} lies in the following special case of Theorem A from \cite{katok}.

\begin{thm}[Katok]\label{katok thm A}
 Let $X$ be a manifold with cotangent bundle $T^*X$ and $H_0,H_1:T^*X\to\R$ be commuting Hamiltonians (i.e. the Poisson bracket $\{H_0,H_1\}=0$), which are positively homogeneous of degree one. Let moreover $H_0$ be a dual Finsler metric. Assume that there exists an open subset $U\subset T^*X-0_X^*$, which is a fiberwise cone, i.e. $a\xi\in U$ if $\xi\in U, a > 0$, invariant under both Hamiltonian flows $\phi_{H_i}^t$ and suppose that $\phi_{H_i}^t|_U$ are periodic flows, i.e. $\phi_{H_0}^T|_U=\phi_{H_1}^S|_U=\id_U$ for some $T,S>0$. Write $X_H$ for the Hamiltonian vector field of a function $H$ and
 \[ U_0 := \{\xi\in U: X_{H_0}(\xi), X_{H_1}(\xi) \text{ linearly dependent} \} . \]
 Then for any $\e>0$, $k\in\N$ and any compact subset $K\subset T^*X$ there exists a function $H:U\to [ 0,\infty)$, such that the following conditions are satisfied:
 \begin{enumerate}
  \item $H$ is positively homogeneous and fiberwise strongly convex in the sense of Definition \ref{def finsler},
  
  \item $\|H-H_0\|_{C^k(K \cap U)} \leq \e$,
  
  \item in $\partial U \cup U_0$, the function $H$ coincides together with all its derivatives with a function of the form $H_0 + \al \cdot H_1$ with $|\al|\leq \e$.
  
  \item the Hamiltionian flow of $H$ is ergodic in each level set
  \[ H^{-1}(c)\cap (U-U_0), \qquad c>0 \]
  with respect to the volume defined by $\wedge^{\dim X}\om$,
  
  \item $\phi_H^t$ has no closed orbits in $U-U_0$.
 \end{enumerate}
\end{thm}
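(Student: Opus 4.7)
The plan is to implement Katok's perturbation scheme from \cite{katok} in the degree-one homogeneous dual-Finsler setting. Since $\{H_0,H_1\}=0$ and both flows are periodic on $U$, the pair $(\phi_{H_0}^t,\phi_{H_1}^s)$ assembles into a smooth $\mathbb{T}^2$-action on $U$ by symplectomorphisms. Outside $U_0$, the generating vector fields $X_{H_0},X_{H_1}$ are linearly independent, so the $\mathbb{T}^2$-orbits are 2-tori which foliate $U-U_0$. I would first introduce semi-local action-angle-type coordinates $(\theta_1,\theta_2,I)$ adapted to this action (compatible with the radial scaling generated by the Euler field, which is what the degree-one homogeneity contributes), in which $H_0,H_1$ depend only on the transverse variable $I$ and $X_{H_0},X_{H_1}$ become the coordinate vector fields in the $\mathbb{T}^2$-direction.

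The new Hamiltonian is a careful modification of the form $H=f(H_0,H_1;\xi)$, or equivalently $H=H_0+\lambda(\xi)H_1$, where $f$ (resp. $\lambda$) is constructed with the following properties. On the compact set $K\cap U$ the modification is taken $C^k$-small, which together with openness of strong fiberwise convexity secures (1) and (2). Near $\partial U$ and throughout $U_0$, the modification is flattened to a linear combination $H_0+\alpha H_1$ with $|\alpha|\leq \e$, together with all derivatives, by means of a smooth cutoff matched to arbitrary order on $\partial U\cup U_0$; this gives (3) and ensures that $H$ extends smoothly and compatibly to the rest of $T^*X$. On the interior of $U-U_0$, the function is designed so that the Hamiltonian vector field $X_H$ on each invariant 2-torus is of the form $a(I)X_{H_0}+b(I)X_{H_1}$ with $b(I)/a(I)$ irrational for almost every $I$, which yields Kronecker ergodicity along individual tori and, by irrationality, immediately excludes closed orbits, giving (5).

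The critical step is (4): upgrading the per-torus Kronecker ergodicity to ergodicity on the full $(2\dim X-1)$-dimensional level set $H^{-1}(c)\cap(U-U_0)$ with respect to the Liouville volume $\wedge^{\dim X}\om$. Here the modification has to be arranged so that the slope $b(I)/a(I)$ genuinely varies with $I$ and the level sets of $H$ cut transversally across the $\mathbb{T}^2$-foliation; Katok's time-change argument combines Weyl equidistribution on each torus with this transversality to propagate equidistribution across tori within the level set. The main obstacle is juggling all of (1)--(5) at once: smoothness, positive homogeneity of degree one, strong fiberwise convexity, $C^k$-closeness to $H_0$, boundary matching in (3), and the ergodicity statement (4) impose competing constraints on the cutoffs and on the slope function, and the delicate point is that the perturbation cannot be made arbitrarily concentrated without destroying convexity — one must exploit the smallness of $\e$ and the compactness of $K$ to keep the Hessian of $\tfrac12 H^2$ positive definite. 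Once this balance is struck, checking (1)--(3) and (5) is straightforward from the construction, and (4) follows from Katok's Kronecker/time-change argument as in \cite{katok}.
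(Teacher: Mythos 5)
The paper does not contain a proof of this statement: it is quoted as a special case of Theorem~A of Katok's 1973 paper \cite{katok} and used as a black box. So there is no in-paper proof to compare against; I will instead review your reconstruction on its own terms.

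Your handling of items (1), (2), (3) and the reduction to a $\mathbb{T}^2$-action with 2-torus orbits foliating $U-U_0$ is reasonable, but there is a genuine gap at step (4), and it is not a peripheral one --- it is exactly the heart of Katok's theorem. You posit that $X_H$ on each ``invariant 2-torus'' is $a(I)X_{H_0}+b(I)X_{H_1}$ with irrational ratio, and you then want to ``upgrade'' per-torus Kronecker equidistribution to ergodicity on the full level set. These two statements cannot coexist. If the 2-tori $\{H_0=a,H_1=b\}$ are invariant under $\phi_H^t$, then the level set $H^{-1}(c)\cap(U-U_0)$ is foliated by a positive-measure family of invariant 2-tori, and the flow is manifestly \emph{not} ergodic there: one cannot ``propagate equidistribution across tori'' when each torus is itself an invariant set. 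Conversely, if (as you later say) the level sets of $H$ cut transversally across the $\mathbb{T}^2$-foliation, then $H_0,H_1$ are no longer first integrals of $\phi_H^t$, the tori are not invariant, and the expression $a(I)X_{H_0}+b(I)X_{H_1}$ along tori is no longer a meaningful description of $X_H$. The prototype $H=H_0+\alpha H_1$ with $\alpha$ irrational realizes precisely the first (non-ergodic) scenario: it satisfies (1), (2), (3), (5), but each level set is a union of invariant Clairaut tori and the flow is integrable, not ergodic. This shows concretely that no perturbation depending only on $(H_0,H_1)$ can work.

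The mechanism Katok actually uses in \cite{katok} is the approximation-by-conjugation (Anosov--Katok) scheme, which is of a genuinely different character from a single explicit time change. One starts from $H_0+\alpha_n H_1$ for rational approximants $\alpha_n\to\alpha$, for which the flow in $U-U_0$ is periodic, and at stage $n$ conjugates by a volume-preserving diffeomorphism $C_n$, commuting with the period-$1/\alpha_n$ rotation, that deliberately destroys the torus foliation on finer and finer scales. The Hamiltonian $H$ is obtained as the $C^\infty$-limit of the resulting sequence, with the sizes of the conjugating maps chosen so small that convergence and fiberwise strong convexity survive. Ergodicity in the limit is proved by a Baire-category/successive-approximation argument rather than by Weyl equidistribution along individual orbits. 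This is the missing idea in your proposal: (4) is not obtained by a soft transversality or Fubini argument on a fixed Hamiltonian, but by an iterative limiting construction, and items (1), (2), (3), (5) then have to be checked \emph{along the whole sequence}, which is where the interaction between homogeneity, convexity, the cutoff near $\partial U\cup U_0$, and the $C^k$-control on $K$ is actually used.

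Two smaller remarks. Your claim that irrationality of $b(I)/a(I)$ for \emph{almost every} $I$ excludes closed orbits gives (5) only up to a measure-zero exceptional set; the theorem requires \emph{no} closed orbits in $U-U_0$, which again is guaranteed by the Anosov--Katok scheme and not by the generic-slope argument. Also, $C^k$-smallness on $K\cap U$ alone does not control the Hessian of $\tfrac12 H^2$ on all of $U$; one needs the perturbation to be globally small in a scale-invariant norm compatible with degree-one homogeneity, which the AbC construction provides by design.
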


Note that, if $\psi^t:X\to X$ is a flow of isometries with respect to a Riemannian metric $g$ on $X$, then the lifted flow $d\psi^t:TX\to TX$ commutes with the geodesic flow $\phi_g^t$. Under the Legendre transform $\LL:TX\to T^*X$ associated to $g$ as above, the flow $\LL\circ d\psi^t \circ\LL^{-1}$ is the Hamiltonian flow of
\[ H_1:T^*X\to\R, \qquad H_1(\xi) := \xi(X_\psi(\pi \xi)) = g(X_\psi(\pi v) ,v ), \]
where
\[ \xi=\LL(v)=g(v,.), \qquad X_\psi=\textstyle \frac{d}{dt}\big|_{t=0}\psi^t. \]
Hence, if $H_0$ is the dual Finsler metric comming from $\sqrt g$, then $H_0^2/2,H_1$ are commuting Hamiltonians. Recall also that $H_0 \cdot X_{H_0} = X_{H_0^2/2}$ and hence, the Hamiltionian flows of $H_0$ and $H_0^2/2$ are reparametrisartions of each other, while due to homogeneity we have $\phi_{H_0}(a\xi)=a\phi_{H_0}^t(\xi)$ for $\xi\in T^*X,a > 0$. 

One immediately infers example (1) in Theorem \ref{thm sphere katok} by letting $H_0$ be the dual Finsler metric comming from the standard round metric on $\s$, $U=T^*\s-0_\s^*$ and $\psi^t$ being the periodic rotation of $\s\subset \R^3$ about the $x_3$-axis.

The goal of the rest of this section is to explain example (2) in Theorem \ref{thm sphere katok} and the example in Theorem \ref{thm torus katok}. Both can be studied in the setting of rotational metrics on the cylinder
\[ \CC : = \R/2\pi \Z\times \R . \]
Suppose $c=(c_1,c_2,c_3):I \to\R^3$ is a smooth curve, parametrized by euclidean arc-length and such that $c_2\equiv 0$ and $c_1>0$, defined on an interval $I\subset\R$. Then $c$ defines an immersed surface of revolution in $\R^3$ by
\[ \varphi:\R/2\pi \Z\times I \to \R^3, \qquad \varphi(x_1,x_2) = ( c_1(x_2) \cos x_1, c_1(x_2) \sin x_1, c_3(x_2)) . \]
The euclidean metric of $\R^3$ induces a Riemannian metric on $\R/2\pi \Z\times I $ by
\[ (\varphi^*\skp_{\R^3})_{x}(v,w)= \la v, G_0(x_2) w \ra_{\R^2}, \quad G_0(x_2) :=\begin{pmatrix} c_1^2(x_2) & 0 \\ 0 & |\dot c(x_2)|_{\R^3}^2 \end{pmatrix}. \]
We solve $h' = c_1\circ h$ for a function $h:J\to I$. By $|\dot c|_{\R^3}=1$ we obtain for $\tilde \varphi(x) := \varphi(x_1,h(x_2))$ that
\[ (\tilde \varphi^*\skp_{\R^3})_x = f^2(x_2) \cdot \skp_{\R^2}, \qquad f :=c_1\circ h > 0 \]
on $\R/2\pi \Z \times J$ (note that $h'>0$ due to $c_1>0$, so $\tilde \varphi$ is again an immersion). Such a metric $f^2(x_2) \cdot \skp_{\R^2}$ is called a {\em rotational metric}. In the following we write $\skp, |.|$ for the euclidean metric and its norm on $\R^2$, also defined in standard coordinates on $(\R^2)^*$. Note that, if $f^2\skp$ is a rotational metric, its dual Finsler metric is given by
\[ H_0:T^*\CC\to\R, \qquad H_0(\xi) := \frac{1}{f((\pi \xi)_2)} \cdot |\xi|. \]
A $2\pi$-periodic flow of isometries with respect to a rotational metric on $\CC$ is given by $\psi^tx=(x_1+t,x_2)$. Hence, as described above, the Hamiltonian
\[ H_1:T^*\CC\to\R, \qquad H_1(\xi) := \xi(X_\psi(\pi \xi)) = \xi_1 \]
commutes with $H_0$. Here we wrote $\xi=\xi_1 dx_1 + \xi_2 dx_2\in (\R^2)^*\cong T_{\pi\xi}^*\CC$.

For the case of the 2-sphere, we take the half circle $c(t)=(\cos t,0,\sin t)$ with $t\in (-\pi/2,\pi/2)$. One can easily check that one obtains
\[ h:\R\to (-\pi/2,\pi/2) , \qquad h(t) = 2\arctan (e^t)-\pi/2 \]
and as a function $f=\cos \circ h$
\[ f_0(t) := \frac{2e^t}{1+e^{2t}}, \qquad f_0:\R\to (0,\infty). \]
Hence, the $2\pi$-periodic geodesic flow of the round sphere minus the north and south pole can be described by the rotational metric $f_0^2\cdot \skp$ on the cylinder $\CC$. Note that $f_0(-t)=f_0(t)$ and that in $[0,\infty)$, the function $f_0$ is strictly decreasing.

In order to obtain examples for the 2-torus, we can choose any periodic function $f:\R\to(0,\infty)$. Then the so obtained rotational metric descends to a metric on a torus. In what follows, we will take for the torus-case a function $f$ of some period $L>0$ and assume that for a small $\e>0$ the function $f$ coincides on $[-L/2+\e,L/2-\e]$ with the function $f_0$ obtained from the round 2-sphere.

\begin{lemma}\label{periodic flow}
Consider a rotational metric $f^2 \skp$ on the cylinder $\CC=\R/2\pi \Z\times \R$ and assume that
\[ \exists ~ b>0 : \qquad  f|_{[-b,b]} = f_0|_{[-b,b]} . \]
For $a\in (0,b)$, letting $H_0=\frac{1}{f}|.|, ~ H_1=\xi_1$ as above, set
\[ U_a := \left \{ ~ \xi \in T^*\CC  ~ : ~ |(\pi\xi)_2|\leq a , ~ \textstyle\frac{H_1(\xi)}{H_0(\xi)} \geq f_0(a) ~ \right \} . \]
Then the sets $U_a$ are invariant under both $\phi_{H_i}^t, i=0,1$ and $\phi_{H_i}^t|_{U_a}$ are $2\pi$-periodic flows. Choose $0<a_0<a_1<b$ and two functions $\chi:\CC\to \R,~ \eta:\R\to\R$, where $\eta$ is smooth, with
\[ \chi(x_1,x_2)= \begin{cases} 0 & : |x_2|>b \\ 1 & : |x_2| \leq b \end{cases} , \qquad  \eta(t) = \begin{cases} 0 & : t \leq f_0(a_1) \\ 1 & : t \geq f_0(a_0) \end{cases} \]
and set
\[ \psi: T^*\CC \to \R, \quad \psi(\xi) := \chi(\pi \xi) \cdot \eta\left(\textstyle \frac{H_1(\xi)}{H_0(\xi)}\right) \cdot H_1(\xi). \]
Then $\psi$ is smooth in $T^*\CC-0_\CC^*$. For $|\al|$ small consider the dual Finsler metric
\[ H_\al (\xi) := H_0(\xi) + \al \cdot \psi(\xi) \]
on $\CC$. Then the Hamiltonian flow of $H_\al$ is completely integrable and
\begin{align}\label{F cut-off formula}
 & H_\al(\xi)= H_0(\xi) && \forall \xi\in T^*\CC-U_{a_1}, \\ \nonumber
 & H_\al(\xi)= H_0(\xi) + \al H_1(\xi) && \forall \xi\in U_{a_0} .
\end{align}
\end{lemma}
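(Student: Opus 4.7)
The plan is to verify the successive claims of the lemma: invariance and periodicity of $U_a$, smoothness of $\psi$, complete integrability of $H_\al$, and the cut-off identities \eqref{F cut-off formula}. The main obstacle I anticipate is the smoothness of $\psi$ in spite of the step-function character of $\chi$; the resolution will rest on the pointwise Clairaut-type inequality $H_1/H_0\leq f(x_2)$, which forces $\eta(H_1/H_0)$ to vanish identically on an open neighborhood of the jump locus of $\chi$, well before that jump is reached.

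\emph{Invariance and periodicity of $U_a$.} Since $\{H_0,H_1\}=0$, both $H_0$ and $H_1$ are preserved by each of the flows $\phi^t_{H_i}$, and so is the ratio $H_1/H_0$. Along $\phi^t_{H_1}(x_1,x_2)=(x_1+t,x_2)$ the coordinate $x_2$ is trivially preserved. Along $\phi^t_{H_0}$, the energy-level identity $\xi_1^2+\xi_2^2=H_0^2 f^2(x_2)$ gives $\xi_2^2=H_0^2(f^2(x_2)-(H_1/H_0)^2)\geq 0$, hence $f(x_2(t))\geq H_1/H_0$ along the orbit. For $\xi\in U_a$ this yields $f(x_2(t))\geq f_0(a)$; combined with $f=f_0$ on $[-b,b]$ and strict monotonicity of $f_0$ on $[0,b]$, the connected component of $\{f\geq f_0(a)\}\subset\R$ meeting $\{|x_2|\leq a\}$ equals $\{|x_2|\leq a\}$, so the orbit cannot escape this slab. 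Periodicity of $\phi^t_{H_1}$ as a $2\pi$-translation is immediate; since orbits of $\phi^t_{H_0}|_{U_a}$ remain in the slab where $f=f_0$, they coincide with round-sphere geodesics (great circles, away from the poles), which are $2\pi$-periodic on every energy level by the degree-one homogeneity of $H_0$.

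\emph{Smoothness, integrability, and the cut-off identities.} For smoothness, I would apply the pointwise bound $H_1/H_0=\xi_1 f(x_2)/|\xi|\leq f(x_2)$ on $T^*\CC-0^*_\CC$. In a neighborhood of $\{|x_2|=b\}$, $f(x_2)=f_0(x_2)<f_0(a_1)$ because $a_1<b$ and $f_0$ is strictly decreasing on $[0,\infty)$, so $\eta(H_1/H_0)\equiv 0$ there; hence the product $\chi\cdot\eta(H_1/H_0)$ extends $C^\infty$-ly across the jump of $\chi$, and $\psi$ is smooth on $T^*\CC-0^*_\CC$. For integrability, $H_0$, $H_1$ and $f$ depend only on $(x_2,\xi_1,\xi_2)$, hence so does $\psi$; consequently $\{H_1,\psi\}=-\partial\psi/\partial x_1=0$, which together with $\{H_0,H_1\}=0$ gives $\{H_\al,H_1\}=0$. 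Two Poisson-commuting first integrals $H_\al,H_1$ on the four-dimensional phase space $T^*\CC$, functionally independent on an open dense set, establish Liouville integrability. For \eqref{F cut-off formula}: in $U_{a_0}$ one has $\chi=1$ (from $|x_2|\leq a_0<b$) and $\eta(H_1/H_0)=1$ (from $H_1/H_0\geq f_0(a_0)$), so $\psi=H_1$; outside $U_{a_1}$, either $H_1/H_0<f_0(a_1)$ and $\eta=0$, or $|x_2|>a_1$, in which case either $|x_2|>b$ and $\chi=0$, or $a_1<|x_2|\leq b$ and the Clairaut bound forces $H_1/H_0\leq f_0(|x_2|)<f_0(a_1)$, so $\eta=0$ again. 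In every case $\psi=0$ and $H_\al=H_0$.
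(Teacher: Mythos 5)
Your proof is correct and follows the paper's own strategy: the Clairaut-type bound $H_1/H_0\leq f_0(|x_2|)$ together with the even, strictly-decreasing-on-$[0,\infty)$ character of $f_0$ drives the invariance, the smoothness of $\psi$, and the cut-off identities, and integrability is traced to $H_1$ being a commuting integral of $H_\al$. One small wording issue: you cannot claim $f=f_0$ on a two-sided neighborhood of $\{|x_2|=b\}$ (it is only hypothesized on $[-b,b]$), but the conclusion stands because $\psi\equiv 0$ on the open set $\{|x_2|>a_1\}$ (where either $\eta=0$ or $\chi=0$) and $\psi=\eta(H_1/H_0)H_1$ is manifestly smooth on the open set $\{|x_2|<b\}$, and these two sets cover $T^*\CC-0_\CC^*$.
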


\begin{proof}
The invariance of $U_a$ under $\phi_{H_1}^t$ is trivial. For invariance under $\phi_{H_0}^t$ we only have to check that $|(\pi \phi_{H_0}^t\xi)_2|\leq a$ for all $t\in \R$, if $\xi\in U_a$. But for this, observe that for $\xi\in U_a$ by $f_0(t)=f_0(|t|)$
\begin{align*}
 f_0(a) \leq \textstyle \frac{H_1(\phi_{H_0}^t\xi)}{H_0(\phi_{H_0}^t\xi)} = f_0((\pi \phi_{H_0}^t\xi)_2)\frac{(\phi_{H_0}^t\xi)_1}{\sqrt{(\phi_{H_0}^t\xi)_1^2+(\phi_{H_0}^t\xi)_2^2}} \leq f_0(|(\pi \phi_{H_0}^t\xi)_2|).
\end{align*}
As $f_0$ is strictly decreasing in $[0,\infty)$ we have $|(\pi \phi_{H_0}^t\xi)_2|\leq a$ for all $t$. The periodicity of $\phi_{H_1}^t$ is again trivial and for $\phi_{H_0}^t|_{U_a}$ this follows, since the geodesic flow of the round 2-sphere is $2\pi$-periodic in its unit tangent bundle.

The equations \eqref{F cut-off formula} are obvious from the definition of $U_a$. To see that $\psi$ is smooth, just observe that $\eta\left(\textstyle \frac{H_1(\xi)}{H_0(\xi)}\right)=0$ for $|(\pi \xi)_2|\in (a_1,b)$:
\begin{align*}
 \textstyle \frac{H_1(\xi)}{H_0(\xi)} = f_0((\pi \xi)_2)\frac{\xi_1}{\sqrt{\xi_1^2+\xi_2^2}} \leq f_0((\pi \xi)_2) \leq f_0(a_1) 
\end{align*}
due to the monotonicity of $f_0$. For the integrability of $\phi_{H_\al}^t$ just observe that $H_\al$ is defined in terms of the commuting integrals $H_0,H_1$, which are well-known to be independent almost everywhere.
\end{proof}

\begin{remark}
 The set $U_a$ is a cone around $dx_1$ in each $T_x^*\CC$. For $|x_2|> a$ the cone is empty, while for $|x_2|=a$ it is a line and for $x_2=0$ the cone is opened the widest. If $f=f_0$, then for $a\to \infty$ the complement
 \[ T^*\s-\{ -\xi,\xi : \xi\in U_a \} \]
 becomes arbitrarily small by $f_0(a) \to 0$ for $|a|\to\infty$. If $f$ is periodic with period $L$ and if $a=L/2-\e$, then in the quotient $T_L^2=\R^2/(2\pi \Z \oplus L \Z)$, the set $U_a$ becomes arbitrarily large for $\e\to 0$ in the dual elliptic tube $E^* \subset S^*T_L^2$ of direction $e_1$ given by the connected component of $U_a$ in
 \[ \{ \xi \in T^*T_L^2 : H_0(\xi) =1 , H_1(\xi) > \min f \}. \]
\end{remark}

We can now readily apply Theorem \ref{katok thm A} to prove the existence of the reversible Finsler metrics in Theorems \ref{thm sphere katok} (2) and \ref{thm torus katok}.

\begin{proof}[Proof of Theorems \ref{thm sphere katok} (2) and \ref{thm torus katok}]
\underline{Step 1} (existence of a non-reversible \linebreak dual Finsler metric $H$). We work in the setting described above, i.e. we are given a rotational metric $f^2\skp$ on $\CC$, which coincides with $f_0^2\skp$ in $\R/2\pi \Z\times [-L/2+\e,L/2-\e]$ for some $L>0$ and a small $\e>0$. By Theorem \ref{katok thm A} applied to the set $U_{a_0}$ in Lemma \ref{periodic flow} with $a_0 \in (L/2-2\e,L/2-\e)$, we find a dual Finsler metric $H :U_{a_0}\to[0,\infty)$ with only one periodic orbit in $U_{a_0}$ (the ``equator'' $\R/2\pi \Z\times \{0\}$) and having an ergodic Hamiltonian flow in  each level set $U_{a_0}\cap H^{-1}(c)$. In particular, the topological entropy of the Hamiltonian flow of $H$ in $U_{a_0}\cap H^{-1}(c)$ vanishes, as there is only subexponential growth of closed orbits, cf. Corollary 4.4 in \cite{katok1}. Moreover, $H$ coincides with a dual Finsler metric of the form $H_0+\al H_1$ in $\partial U_{a_0}$ together with all its derivatives, which by Lemma \ref{periodic flow} can be extended to a dual Finsler metric defined in all of $T^*\CC$. This dual Finsler metric, denoted again by $H$ has a Hamiltonian flow with vanishing topological entropy: In $U_{a_0}$ this was observed before and in $T^*\CC-U_{a_0}$ this follows from the integrability of the Hamiltonian flow of $H_\al$ in Lemma \ref{periodic flow}, cf. Theorem 1 in \cite{paternain}.

\underline{Step 2} (make $H$ reversible). By Lemma \ref{periodic flow} we have $H = H_0$ in the neighborhood $T^*\CC-U_{a_1}$ of $\R dx_2$ in each $T_x^*\CC$. Hence we can define a new dual Finsler metric
\[ H'(\xi) := \begin{cases} H(\xi) & : H_1(\xi) \geq 0 \\ H(-\xi) & : H_1(\xi) < 0 \end{cases} \]
on $\CC$, which is now a reversible dual Finsler metric. The metric is unchanged in the $\phi_H^t$-invariant set $\{ H_1(\xi) \geq 0\}$ and in $\{ H_1(\xi) \leq 0\}$ the Hamiltionian flow of $H'$ is just the reversed flow of $H$ from $\{ H_1(\xi) \geq 0\}$. Hence, this dual Finsler metric has all the desired properties. Translating into the Lagrangian setting in $T\CC$ as described at the beginning of this section, we obtain a Finsler metric $F$ on $\CC$.
\end{proof}

\section{The results of J. Franks and M. Handel}\label{section Fr-Ha}

In this section we recall (and slightly adjust) results of J. Franks and M. Handel from \cite{Fr-Ha}. Let us review the setting of \cite{Fr-Ha}.

Let $\mu$ be a measure on the 2-sphere $\s$ topologically conjugate to the Lebesgue measure (i.e. there exists a homeomorphism of $\s$ conjugating $\mu$ to the Lebesgue measure). Let $N$ be a surface diffeomorphic to $\s$ with $n$ disjoint, smoothly bounded, open discs removed. Collapsing each boundary circle $\partial_iN$ of $N$ into a point $p_i\in \s$ defines a $C^0$ quotient map $\pi_N:N\to \s$, whose restriction $\Int N\to \s-P$ is a $C^\infty$ diffeomorphism, where $P=\{p_1,...,p_n\}$. If $\phi:N\to N$ is an orientation-preserving $C^\infty$ diffeomorphism, leaving each boundary component $\partial_iN$ invariant, we can define a homeomorphism $\psi:\s\to \s$ by $\psi\circ\pi_N=\pi_N \circ \phi$, such that $P\subset \Fix(\psi)$. We denote by $\Diff(\s,P,\mu)$ the set of all so obtained homeomorphisms $\psi:\s\to \s$, that in addition preserve the measure $\mu$.

For $\psi\in\Diff(\s,P,\mu)$ set
\begin{align*}
 M := \s-\Fix(\psi), \qquad f := \psi|_M
\end{align*}

\begin{defn}[free disc recurrence, cf. Definition 1.1 in \cite{Fr-Ha}]\label{def disc rec}
 A (topological) open disc $B\subset M$ is a \emph{free disc} for $f$, if $f(B)\cap B=\emptyset$. A point $x\in M$ is called \emph{free disc recurrent} for $f$, written $x\in W$, if there exists $n\in\Z-\{0\}$ and a free disc $B$ for $f$ with $x, f^n(x)\in B$. A point is called \emph{weakly free disc recurrent}, written $x\in\W$, if $x\in \Int_M(\Clos_M(W_0))$ for some connected component $W_0$ of $W$.
\end{defn}

Note that $\W$ contains the full-measure set of birecurrent points for $f$ in $M$, and that $\W$ is open and dense in $M$.

We write
\[ \AA := \R/\Z\times[0,1], \qquad \tilde\AA := \R\times[0,1], \qquad \Int\AA=\R/\Z\times(0,1). \]

\begin{lemma}[annular compactification, cf. Notation 2.7 in \cite{Fr-Ha}]\label{lemma prime ends}
 If $U\subset M$ is an $f$-invariant, open annulus, then there exists a homeomorphism $h_U: \AA \to\AA$ (called the \emph{annular compactification}) of the closed annulus, which is smoothly conjugated to $f|_U$ in $\Int\AA$. If in $\s$, one end $\partial_1U$, say, of $\Clos_\s U$ contains more than one point and in addition one point, which is fixed by $\psi$, then also $\partial_1\AA$ contains a fixed point of $h_U$.
\end{lemma}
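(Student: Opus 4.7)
The plan is to attach to each end of $U$ a Carath\'eodory circle of prime ends and extend the dynamics topologically. Since $U$ is an open annulus in $\s$, the complement $\s \setminus U$ has exactly two connected components $\partial_1 U, \partial_2 U$, one per end, and at each end I would adjoin the associated circle of prime ends (equivalence classes of null-chains of cross-cuts of $U$ accumulating on that end). This produces a closed annulus $\AA$ with $\partial_i \AA$ the prime-ends circle at $\partial_i U$ and $\Int \AA$ canonically identified with $U$; there is a natural continuous map $\AA \to \Clos_\s U$ sending each prime end to its impression.

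To extend $f|_U$ to a homeomorphism $h_U : \AA \to \AA$, I would use that $\psi$ preserves $U$ and fixes each component of $\partial U$ setwise (the latter since $\psi$ fixes each collapsed point $p_i$ and permutes ends accordingly, combined with the $f$-invariance of $U$). Consequently $\psi$ sends cross-cuts to cross-cuts and null-chains to null-chains at each end, inducing a well-defined bijection on each prime-ends circle. Continuity of this boundary bijection follows from the functorial properties of the prime-ends construction, yielding the desired homeomorphism $h_U$, which by construction is smoothly conjugated to $f|_U$ on $\Int \AA$.

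For the second assertion, let $p \in \partial_1 U \cap \Fix(\psi)$ and consider the set $K \subset \partial_1 \AA$ of prime ends whose impression contains $p$. Since impressions transform as $I(h_U \mathfrak{p}) = \psi(I(\mathfrak{p}))$ and $\psi(p) = p$, the set $K$ is nonempty, closed, and $h_U$-invariant; the hypothesis that $\partial_1 U$ contains a second point $q \neq p$ ensures that some prime end has impression meeting $\partial_1 U \setminus \{p\}$, so $K$ is a proper subset of $\partial_1 \AA$. The main obstacle lies precisely here: an orientation-preserving circle homeomorphism with a proper compact invariant set need not a priori have a fixed point. To bridge this gap, I would invoke a prime-ends rotation-number argument in the spirit of Cartwright--Littlewood and Mather, showing that the existence of a fixed point of $\psi$ in the impression of an invariant compact set of prime ends forces the prime-ends rotation number of $h_U|_{\partial_1 \AA}$ to vanish, whence $h_U|_{\partial_1 \AA}$ admits a genuine fixed point. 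The nondegeneracy of $\partial_1 U$ is what makes this rotation-number collapse possible and is therefore essential for the conclusion.
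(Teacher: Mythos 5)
Your first two paragraphs correctly construct the prime-end compactification at each end of $U$ and extend $f|_U$ to a homeomorphism $h_U$ of $\AA$; this matches the use of Notation 2.7 in \cite{Fr-Ha} and the prime-end theory of \cite{mather-prime-end} for the first assertion.

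For the second assertion there is a genuine gap, which you partly acknowledge. First, the claim that the set $K$ of prime ends whose impression contains $p$ is a \emph{proper} subset of $\partial_1\AA$ does not follow from your reasoning: knowing that some prime end has impression meeting $\partial_1U\setminus\{p\}$ does not exclude that prime end from $K$, since impressions may overlap, and for pathological boundaries a single point of $\partial_1U$ can lie in the impression of every prime end. Second, and more importantly, even if $K$ were proper, an orientation-preserving circle homeomorphism with a proper compact invariant set need not have a fixed point, as you yourself observe; a Denjoy-type minimal set or a finite periodic orbit of period greater than one would also be invariant. The rotation-number argument you would invoke to close this gap is only named, not carried out. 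The crux of the lemma is precisely to show that the prime-end rotation number of $h_U|_{\partial_1\AA}$ is zero, not merely rational, so that a fixed prime end exists; deducing this from a fixed point of $\psi$ lying in the nondegenerate continuum $\partial_1U$ is the nontrivial content of the result in \cite{mather-prime-end} that the lemma relies on, and it needs to be identified and its hypotheses checked in this setting. As written, the second half of your argument is a plan rather than a proof.
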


The last assertion follows from the properties of the prime-end compactification (cf. \cite{mather-prime-end}).

\begin{defn}[rotation number, cf. Definition 2.1 in \cite{Fr-Ha}]
Let $h:\AA\to\AA$ be a homeomorphism of the closed annulus and $\tilde h:\tilde\AA\to \tilde\AA$ a lift to the universal cover. We write $p_1:\tilde\AA\to\R$ for the projection to the $\R$-factor and setting
\[ \tilde \tau_{\tilde h}(\tilde x) := \lim_{n\to\infty}\frac{p_1(\tilde h^n(\tilde x))-p_1(\tilde x)}{n} , \quad \tilde x \in \tilde\AA \]
(if the limit exists), we define the \emph{rotation number}
\[ \rho_h(x) \in \R/\Z, \quad x\in\AA \]
to be the projection of $\tilde \tau_{\tilde h}(\tilde x)$ to $\R/\Z$, where $\tilde x$ projects to $x$.
\end{defn}

Clearly, $\rho_h$ is independent of the choice of the lift $\tilde h$, invariant under $h$ and is defined almost everywhere in $\AA$ by Lemma 2.2 in \cite{Fr-Ha}.

We can now state the two theorems from \cite{Fr-Ha}, which we are going to use, describing the structure of $\psi$-invariant sets in $\s$.

\begin{thm}[Franks, Handel]\label{thm 1.2 Fr/Ha}
 Let $\psi\in\Diff(\s,P,\mu)$ have infinite order and $\h(\psi)=0$. Then there exists a countable family $\A$ of pairwise disjoint, $f$-invariant, open annuli $U\subset M$ with the following properties:
 \begin{enumerate}
  \item the union $\bigcup_{U\in\A} U$ equals the set $\W$ of weakly free disc recurrent points for $f$ in $M$,
  
  \item $\A$ is the set of maximal $f$-invariant, open annuli in $M$, i.e. if $V\subset M$ is an $f$-invariant, open annulus, then there exists $U\in \A$ with $V\subset U$.
 \end{enumerate}
\end{thm}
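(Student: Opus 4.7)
The plan is to build $\A$ by first producing, for each weakly free disc recurrent point, an $f$-invariant open annulus in $M$ through it, and then extracting a maximal subfamily. The substantive step is the first one; everything after is organizational.

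\textbf{Step 1 (from free disc recurrence to an invariant annulus).} Given $x\in W$, pick a free disc $B$ and $n\neq 0$ with $x,f^n(x)\in B$. Applying a Brouwer-type translation theorem adapted to orientation-preserving sphere homeomorphisms with fixed points (the existence of fixed points being guaranteed by $P\subset\Fix(\psi)$), one obtains an $f$-invariant open topological annulus $U\subset M$ containing $x$. This is the place where the hypotheses $\h(\psi)=0$ and $\psi$ having infinite order are genuinely used: positive entropy would allow horseshoe dynamics and rotation set intervals that obstruct nesting a free-disc orbit inside an invariant annulus, while finite order would make the statement trivial or vacuous. For weakly free disc recurrent $x\in\W$, one then transports the annulus obtained for a nearby $y\in W$ using $x\in\Int_M\Clos_M W_0$ together with the fact that the invariant annulus constructed for $y$ can be enlarged to an open set containing a whole neighborhood of $y$ in the component $W_0$.

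\textbf{Step 2 (maximality and disjointness).} Order the $f$-invariant open annuli $U\subset M$ by inclusion. A nested union of invariant open annuli in a surface is either an invariant open annulus or an invariant open disc; in the latter case the Brouwer plane translation theorem, combined with the zero-entropy assumption to rule out a horseshoe inside the disc, would force a fixed point in the interior of that disc, contradicting $U\subset M = \s-\Fix(\psi)$. Hence ascending unions stay in the class, and Zorn's lemma yields a maximal invariant open annulus through each point constructed in Step~1. Let $\A$ be the collection of these maximal annuli. If two elements $U,U'\in\A$ met, then $U\cup U'$ would again be an $f$-invariant open annulus (for the same surface-topology reason combined with invariance), contradicting maximality. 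Hence the elements of $\A$ are pairwise disjoint.

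\textbf{Step 3 (countability and the equality $\bigcup_{U\in\A}U=\W$).} Since $M$ is second countable and the elements of $\A$ are pairwise disjoint nonempty open sets, $\A$ is countable. The inclusion $\bigcup_{U\in\A}U\subset\W$ holds because any point in an $f$-invariant open annulus $U$ is free disc recurrent (one may choose a small disc in $U$ disjoint from its image under some large iterate of $f$, using that the rotation behavior on $U$ has bounded displacement by zero entropy), so $U\subset W$ and $U\subset\Int_M\Clos_M(W_0)$ for its connected component of $W$. The reverse inclusion is exactly what Step~1 provides. Property (2) is the maximality built into the construction, together with the observation that any invariant open annulus $V\subset M$ is contained in a maximal one, which in turn must belong to $\A$ by the disjointness argument.

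\textbf{Main obstacle.} The analytic heart is Step~1: extracting an invariant annulus from a single free-disc return, which requires a careful Brouwer/prime-end argument tailored to zero entropy on $\s$. Once that is in place, the organization via Zorn's lemma and surface topology is formal. The more delicate point in Step~3 is the clean identification of the union with $\W$ (rather than with $W$ itself); this uses the weak recurrence definition precisely to ensure that interior-closure points inherit the invariant annulus from their free-disc recurrent neighbors.
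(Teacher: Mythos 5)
The paper does not prove this theorem: it is stated and used as a cited result of Franks and Handel \cite{Fr-Ha}, so there is no ``paper's own proof'' to compare against. Evaluated on its own terms, your sketch has serious gaps, and the overall framing understates the difficulty by a wide margin.

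The decisive gap is Step~1. You write that from $x, f^n(x)\in B$ with $B$ free, ``a Brouwer-type translation theorem adapted to orientation-preserving sphere homeomorphisms with fixed points'' yields an $f$-invariant open annulus through $x$. No such off-the-shelf statement exists. Brouwer-type results give fixed points or translation arcs/domains; they do not produce an invariant annulus. The Franks--Handel construction of the annular decomposition is the main content of their paper and relies on a substantial amount of machinery: relative isotopy and Thurston normal form for the map on $\s$ punctured at $\Fix(\psi)$ (which is where infinite order and zero entropy enter, ruling out pseudo-Anosov and reducing to periodic/reducible type), careful prime-end analysis, mean rotation vectors and their continuity, and an inductive scheme over reducing curves. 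Saying ``this is where the hypotheses are genuinely used'' is a correct intuition, but it replaces the entire argument with a placeholder; the step cannot be carried out as written.

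Steps~2 and~3 also contain unproved claims that do not follow formally. In Step~2, a nested union of open annuli in $\s$ need not be an annulus or a disc (consider inessential inclusions, or the union exhausting a more complicated open subsurface), and the assertion that if two invariant open annuli $U,U'$ intersect then $U\cup U'$ is again an annulus is false in general without a compatibility condition on their cores; the actual disjointness of the maximal annuli in $\A$ is a theorem, not a formal consequence of Zorn's lemma. In Step~3, the claim that every point of an $f$-invariant open annulus $U\subset M$ is free disc recurrent needs an argument --- near the ends of $U$ one must still exhibit a free disc containing $x$ and some $f^n(x)$, and ``bounded displacement by zero entropy'' is not a usable quantitative statement here. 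The identification $\bigcup_\A U=\W$ (with $\W$ rather than $W$) is delicate in Franks--Handel precisely because of such boundary effects, and is not a one-line observation.

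In short: the organizational scaffolding (Zorn, countability from disjoint open sets in a second-countable space) is fine, but all three steps rest on claims that are either hand-waved or not actually true as stated, and the core of the theorem --- constructing the invariant annulus from a free-disc return under the entropy-zero hypothesis --- is missing entirely. This theorem should be cited, not re-derived.
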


\begin{thm}[Franks, Handel]\label{thm 1.4 Fr/Ha}
 Let $\psi\in\Diff(\s,P,\mu)$ have infinite order and $\h(\psi)=0$, let $\A$ be given by Theorem \ref{thm 1.2 Fr/Ha} and for $U\in\A$ let $h_U:\AA\to \AA$ be the annular compactification of $f|_U : U \to U$. Then
 \begin{enumerate}
  \item the rotation number $\rho_{h_U}:\AA\to \R/\Z$ is well-defined and continuous everywhere, 
  
  \item if $\Fix(\psi)\subset \s$ contains at least three points, then $\rho_{h_U}$ is non-constant.
  
 \end{enumerate}
\end{thm}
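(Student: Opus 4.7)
The plan is to exploit the zero-entropy, area-preserving structure of the annular compactification $h_U:\AA\to\AA$, together with the Franks--Handel classification of invariant annuli. I would split the argument into existence of $\rho_{h_U}$ everywhere, continuity, and non-constancy under the extra hypothesis.

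For existence on $\Int\AA$, note that $h_U$ is smoothly conjugate in the interior to $f|_U$, so it preserves a finite Borel measure of full support inherited from $\mu$, and $\h(h_U)\leq\h(\psi)=0$. Birkhoff's theorem applied to the displacement cocycle on the universal cover yields $\rho_{h_U}$ almost everywhere. To push this to every point of $\Int\AA$, I would invoke that a zero-entropy, area-preserving annulus homeomorphism contains no topological horseshoe (any horseshoe would produce positive topological entropy by Katok's horseshoe theorem); absence of horseshoes forces the $\omega$- and $\alpha$-limit sets of each point to carry a single rotation number, and these are identified by \Poincare recurrence. On the boundary circles $h_U|_{\partial_i\AA}$ is a circle homeomorphism, hence has a classical \Poincare rotation number.

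For continuity, $x\mapsto\tilde\tau_{\tilde h_U}(\tilde x)$ is a priori only semi-continuous. A discontinuity at some $x_0\in\AA$ would yield sequences $x_n,y_n\to x_0$ realizing different limit rotation numbers; using area-preservation one could shadow both behaviours on nearby disjoint invariant subsets and extract a topological horseshoe by a Franks--Le~Calvez type construction, again contradicting $\h(h_U)=0$.

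For non-constancy, assume $\rho_{h_U}\equiv\alpha$. Since $U$ is maximal among $f$-invariant annuli in $M=\s-\Fix(\psi)$, each of the two ends of $\Clos_\s U$ must abut $\Fix(\psi)$ --- otherwise the maximality in Theorem \ref{thm 1.2 Fr/Ha} could be violated by adjoining an $f$-invariant neighbourhood of the end. Lemma \ref{lemma prime ends} then provides fixed points of $h_U$ on both boundary circles of $\AA$, forcing $\alpha=0$. But when $|\Fix(\psi)|\geq 3$, at least one further fixed point $p$ of $\psi$ lies outside $\Clos_\s U$; using the prime-end compactification one detects $p$ through the boundary circle structure of $h_U$, and combining $\alpha=0$ on $\partial\AA$ with the presence of $p$ forces a nonzero rotation number along an approach curve from $\Int\AA$ to $\partial\AA$, contradicting constancy. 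The main obstacle I expect is precisely this last step: translating the presence of a $\psi$-fixed point outside $\Clos_\s U$ into a detectable twist inside $\AA$ via Carath\'eodory's prime ends is the technical heart of \cite{Fr-Ha}.
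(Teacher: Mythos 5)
This theorem is not proved in the paper at all. It is stated as a citation of Franks--Handel's deep classification result from \cite{Fr-Ha}, and Section \ref{section Fr-Ha} explicitly says it is ``recalling'' their results. So there is no ``paper's own proof'' to compare against; what the paper actually does, when the hypotheses of \cite{Fr-Ha} need to be slightly relaxed (non-smooth boundary curves for the torus case), is prove Lemma \ref{FrHa works on T^2}, which identifies the only two places in \cite{Fr-Ha} where smoothness of $\partial N$ is used and shows how to repair them. Re-proving the Franks--Handel theorem itself is far outside the scope of this paper.

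As a standalone sketch, your proposal has several real gaps. For existence and continuity, the inference ``zero entropy $\Rightarrow$ no horseshoe $\Rightarrow$ rotation number defined at every point and continuous'' is not a two-line consequence of Katok's theorem: Katok's horseshoe theorem applies to $C^{1+\alpha}$ diffeomorphisms with a hyperbolic ergodic measure of positive entropy, while $h_U$ is only a homeomorphism on $\AA$ (only the interior is smooth), and going from ``no horseshoe'' to ``$\omega$- and $\alpha$-limit sets carry a single rotation number realized at the point itself'' is itself a substantial theorem. Likewise, the claim that a discontinuity ``would yield a topological horseshoe by a Franks--Le\,Calvez type construction'' is asserted, not argued; making this precise is essentially the content of a large part of \cite{Fr-Ha}. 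For non-constancy, the assertion that maximality of $U\in\A$ forces \emph{both} ends of $\Clos_\s U$ to abut $\Fix(\psi)$ is not justified: by Theorem \ref{thm 1.2 Fr/Ha} the union $\bigcup_{U\in\A}U=\W$ is merely open and dense in $M$, so the frontier of a maximal annulus can lie in the closed nowhere-dense set $M-\W$ rather than in $\Fix(\psi)$. And even granting fixed prime ends on both boundary circles, the final step --- detecting a twist from a $\psi$-fixed point $p$ \emph{outside} $\Clos_\s U$ --- is left as ``the technical heart,'' i.e.\ unproved; indeed this is exactly where the argument must engage with the global structure of the decomposition, and it cannot be reduced to the local prime-end picture of $h_U$ alone. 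In short, the sketch gestures at the right circle of ideas (entropy obstructs horseshoes; prime ends detect boundary fixed points) but none of the three parts is a proof, and the paper itself makes no attempt to supply one.
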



In our applications, $\psi\in\Diff(\s,P,\mu)$ is obtained from a first-return map $\phi:N\to N$ of a \Poincare section $N$, and in this situation we will have an invariant measure $\nu$ defined by a smooth volume form only in $\Int N \cong \s-P$. Hence, the following observation will be useful.

\begin{lemma}\label{lemma mu}
 If $\nu$ is a measure in $\s-P$ induced by a smooth volume form defined in $\s-P$, such that $\nu(\s-P)<\infty$, then the measure $\mu$ in $\s$, defined by
 \[ \mu(A)=\nu(A-P) , \]
 is topologically conjugate to a Lebesgue measure (i.e. the properly rescaled standard Lebesgue measure).
\end{lemma}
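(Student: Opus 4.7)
The plan is to reduce the lemma to the classical Oxtoby--Ulam theorem: every non-atomic Borel probability measure on a closed manifold of dimension at least two that is positive on every non-empty open set is topologically conjugate to the normalized Lebesgue measure. Rescaling by $1/\nu(\s-P)$, I may assume throughout that $\mu$ is a Borel probability measure on $\s$, so the task reduces to verifying the two hypotheses of Oxtoby--Ulam for $\mu$.

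First I would check that $\mu$ is non-atomic. For $p\in P$, the definition $\mu(A)=\nu(A-P)$ gives $\mu(\{p\})=\nu(\emptyset)=0$. For $p\in\s-P$, we have $\mu(\{p\})=\nu(\{p\})=0$ because $\nu$ is induced by a smooth volume form on the 2-manifold $\s-P$, which in local coordinates reads $f\,dx_1\wedge dx_2$ with $f>0$, and a single point has Lebesgue measure zero in $\R^2$. Next I would verify positivity on non-empty open sets. Given any non-empty open $U\subset\s$, the finiteness of $P$ together with $\dim\s=2$ implies that $U-P$ is non-empty and open in $\s-P$, so the same local computation shows $\nu(U-P)>0$, i.e.\ $\mu(U)>0$. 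Applying Oxtoby--Ulam then yields a homeomorphism $h:\s\to\s$ with $h_*\mu$ equal to the normalized Lebesgue measure on $\s$, which completes the proof upon un-rescaling.

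The argument contains no substantial difficulty, and the main (essentially only) obstacle is to cite Oxtoby--Ulam in a form directly applicable here. Note that any possible degeneration of the smooth volume form defining $\nu$ near the puncture set $P$ plays no role: $\mu$ is defined to ignore $P$ entirely, the finiteness $\nu(\s-P)<\infty$ is part of the hypothesis, and so the resulting $\mu$ is a finite Borel measure to which the theorem applies verbatim.
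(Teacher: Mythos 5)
Your proof is correct, but takes a different route from the paper. You invoke the \emph{manifold} version of the Oxtoby--Ulam theorem directly: any non-atomic Borel probability measure on a closed manifold which is positive on open sets is homeomorphically equivalent to normalized Lebesgue measure. The paper only cites the version for the closed disc $D$ (Theorem~2 of Oxtoby--Ulam, which also asserts the conjugating homeomorphism may be chosen to fix $\partial D$ pointwise), and then derives the spherical statement from it: by an intermediate-value argument on a one-parameter family of equators, $\s$ is split into two closed discs of equal $\mu$-mass, Oxtoby--Ulam is applied on each disc, and the boundary-fixing property is used to glue the two homeomorphisms along the common circle. Your approach is shorter and avoids the cutting-and-gluing entirely, at the cost of citing the stronger manifold form of the theorem; the paper's argument is more self-contained in that it needs only the disc version. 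Both verifications of the hypotheses (non-atomicity and positivity of $\mu$, using that $\nu$ comes from a smooth positive area form on the 2-manifold $\s-P$ and that $P$ is finite) are the same, so the only substantive difference is which form of Oxtoby--Ulam is taken as the starting point.
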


\begin{proof}
Let $D$ be the $n$-dimensional closed unit ball and $\mu_L$ be the (outer) Lebesgue measure on $D$. Recall the following theorem due to J. C. Oxtoby and S. M. Ulam, cf. Theorem 2 in \cite{oxtoby_ulam}. A finite outer measure $\mu$ on $D$
is topologically conjugate to $\frac{\mu(D)}{\mu_L(D)} \cdot \mu_L$ if and only if $\mu$ satisfies the following conditions:
\begin{enumerate}
 \item (Caratheodory's condition) If $A,B\subset D$ with $\inf\{d(x,y):x\in A,y\in B\}>0$, then $\mu(A\cup B)=\mu(A)+\mu(B)$,
 
 \item (regularity) $\mu(A) = \inf_{U\supset A \text{ open}} \mu(U)$,
 
 \item (positive on open sets) $\mu(U)>0$ for $U\neq \emptyset$ and $U$ open,
 
 \item (no atoms) $\mu(\{\text{pt}\}) = \mu(\partial D)=0$.
\end{enumerate}
Moreover, the homeomorphism $h:D\to D$ between $\mu,\mu_L$ can be chosen to satisfy $h|_{\partial D}=\id_{\partial D}$.

We now return to our measure $\mu$ on $\s$, let $\mu_L$ be the Lebesgue measure on $\s$ and set $\lam = \frac{\mu(\s)}{\mu_L(\s)}$. Consider the equator $\gamma = \s\cap\{x_3=0\}$ and rotate $\gamma$ about the $x_1$-axis. Then for each $t\in [0,2\pi]$, we can write $\s$ as the union of to compact discs $D_t^0, D_t^1$, that intersect in the rotated equator $\gamma_t$. Assuming that e.g. $\mu(D_t^0) \leq \mu(D_t^0)$ in $t=0$, we find the opposite inequality after time $t=\pi$. By continuity of $t\mapsto \mu(D_t^i)$ we find some $t_0\in[0,\pi]$, where both discs $D_{t_0}^0,D_{t_0}^1$ have the same $\mu$-area. Obviously, by our assumptions, $\mu$ restricted to the discs $D_{t_0}^i$ satisfies items (1)-(4) above, and hence we can apply the theorem of Oxtoby and Ulam to $\mu|_{D_{t_0}^i}$ to obtain homeomorphisms $h_i:D_{t_0}^i\to D_{t_0}^i$ conjugating $\mu$ to $\lam\cdot \mu_L$ in $D_{t_0}^i$. By the additional assertion that $h_i|_{\partial D_{t_0}^i}$ is the identity, we obtain the desired homeomorphism of $\s$.
\end{proof}

In order to apply Theorems \ref{thm 1.2 Fr/Ha} and \ref{thm 1.4 Fr/Ha} to \Poincare sections in $S\T$ for Finsler metrics on the 2-torus, we need the following observation, which under certain conditions allows the boundary circles of the surface $N$ above to be only continuous, instead of $C^\infty$.

\begin{lemma}\label{FrHa works on T^2}
 Let $\gamma_1,...,\gamma_n\subset \s$ be disjoint, continuous, simple, closed curves and let $N\subset \s$ be the compact surface obtained from $\s$ by cutting out interiors of the $\gamma_i$. Then Theorems \ref{thm 1.2 Fr/Ha}, \ref{thm 1.4 Fr/Ha} continue to hold for $\mu$-preserving homeomorphisms $\psi:\s\to\s$ obtained from diffeomorphisms $\phi:N\to N$ by collapsing each $\gamma_i$ into a fixed point $p_i\in P$ as above with the additional condition that $\phi:N\to\s$ extends to a $C^\infty$ embedding of an open neighborhood $U\subset\s$ of $N$ into $\s$.
\end{lemma}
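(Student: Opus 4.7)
The plan is to reduce the continuous-boundary setting to the smoothly bounded one, where Theorems \ref{thm 1.2 Fr/Ha} and \ref{thm 1.4 Fr/Ha} apply directly. The extension hypothesis $\tilde\phi:U\to\s$ is the key tool: since $\tilde\phi$ is a $C^\infty$ embedding preserving each continuous curve $\gamma_i$ setwise, and preserving each side of $\gamma_i$ (as $\tilde\phi(N)=N$), it maps a collar of $\gamma_i$ inside $U\setminus N$ into another collar on the same side of $\gamma_i$. I would choose disjoint smooth simple closed curves $\gamma_i^s\subset U\setminus N$ very close to $\gamma_i$, bounding a smoothly bounded compact surface $N^s\supset N$ with $N^s\subset U$; the smooth curves $\tilde\phi(\gamma_i^s)$ then lie in a thin collar of $\gamma_i$ contained in $U\setminus N$, on the same side as $\gamma_i^s$.

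Next, after first adjusting $\gamma_i^s$ within its smooth isotopy class in the collar so that $\gamma_i^s$ and $\tilde\phi(\gamma_i^s)$ enclose equal areas, I would apply Moser's lemma to construct a smooth, volume-preserving diffeomorphism $\tau:\s\to\s$, supported in a small annular neighborhood of $\bigcup\gamma_i$ inside $U\setminus N$ (so $\tau=\id$ on $N$), with $\tau(\tilde\phi(\gamma_i^s))=\gamma_i^s$. The composition $\phi^s:=\tau\circ\tilde\phi|_{N^s}$ is then a smooth, orientation- and volume-preserving diffeomorphism of $N^s$ with $\phi^s|_N=\phi$ and $\phi^s(\gamma_i^s)=\gamma_i^s$. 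Collapsing each smooth $\gamma_i^s$ to a point $p_i^s$ yields a homeomorphism $\psi^s:\s\to\s$ in the classical Franks-Handel class $\Diff(\s,P^s,\mu^s)$, with $\mu^s$ the push-forward of the smooth volume on $N^s$, so that Theorems \ref{thm 1.2 Fr/Ha} and \ref{thm 1.4 Fr/Ha} apply directly to $\psi^s$, providing the invariant annular family $\A^s$ with continuous, non-constant rotation numbers.

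Finally, I would transfer the conclusions from $\psi^s$ to $\psi$. Since $\phi^s|_N=\phi$, the $\psi^s$-dynamics on the image of $\Int N$ coincide with the $\psi$-dynamics on $\s\setminus P$ under the obvious identification $p_i^s\leftrightarrow p_i$, while the additional $\psi^s$-dynamics on the collar-discs around $p_i^s$ is confined to small neighborhoods of $P^s$ corresponding to small neighborhoods of $P$ in $\s$. The invariant annular family $\A^s$, restricted to the image of $\Int N$, therefore yields the maximal $\psi$-invariant family $\A$ covering the weakly free-disc recurrent set for $\psi$, and the rotation numbers on the annular compactifications transfer as well via Lemma \ref{lemma prime ends}, which is purely topological.

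The main obstacle is the Moser-lemma step: one must carefully deform $\gamma_i^s$ so that $\gamma_i^s$ and $\tilde\phi(\gamma_i^s)$ enclose equal areas in the collar, permitting the volume-preserving correction $\tau$. This is a standard but delicate implicit-function-type argument, since small deformations of $\gamma_i^s$ affect both the enclosed area and the image under $\tilde\phi$. Some additional care is needed to verify that the collar-dynamics of $\psi^s$ around $P^s$ arising from the modification $\tau$ does not produce spurious recurrence that would alter the annular family transferred back to $\psi$; choosing $\gamma_i^s$ sufficiently close to $\gamma_i$ confines this auxiliary dynamics to arbitrarily small neighborhoods of $P$ and thus leaves the essential recurrent structure of $\psi$ on $\s\setminus P$ unchanged.
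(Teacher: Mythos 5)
Your proposal takes a genuinely different route from the paper's. You try to build a new volume-preserving diffeomorphism $\phi^s$ of a smoothly bounded surface $N^s\supset N$ extending $\phi$, apply Theorems \ref{thm 1.2 Fr/Ha} and \ref{thm 1.4 Fr/Ha} to the resulting $\psi^s$, and transfer conclusions back. The paper instead leaves $N$ as is, identifies the only two places where Franks--Handel actually use smoothness of $\partial N$ (a Yomdin-type length-growth estimate, and the existence of a Dehn-twist normal form for $\psi$ rel $\Fix(\psi)$), and verifies just these: a smooth $F:\s\to\s$ with $F|_N=\phi|_N$ is produced via the isotopy extension theorem (not required to be volume-preserving or to fix the $\gamma_i$), and then Yomdin's theorem is applied to $F$ and curves in the $F$-invariant set $N$.

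Your approach has three genuine gaps. First, the Moser step requires the extension $\tilde\phi$ to preserve volume on the collar in $U\setminus N$, but the hypothesis only provides a $C^\infty$ embedding there; the measure $\nu$ preserved by $\phi$ is defined on $\Int N$ only, so "equal areas enclosed by $\gamma_i^s$ and $\tilde\phi(\gamma_i^s)$" is not meaningful without choosing and controlling an auxiliary area form, and even if you do, a volume-preserving correction $\tau$ will generally not make $\phi^s=\tau\circ\tilde\phi$ volume-preserving in the collar. Second, Theorems \ref{thm 1.2 Fr/Ha} and \ref{thm 1.4 Fr/Ha} require $\h(\psi^s)=0$; you know $\h(\phi|_N)=0$, but the dynamics $\phi^s$ introduces in the collar $C_i$ between $\gamma_i$ and $\gamma_i^s$ is new, $C_i$ is $\phi^s$-invariant, and nothing in the hypotheses prevents $\h(\phi^s|_{C_i})>0$; making the collar thin does not help, since there is no continuity of entropy under shrinking the invariant region. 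Third, the transfer step is not the formality you suggest: $\Fix(\psi^s)$ and $\Fix(\psi)$ differ (new fixed points in the collars replace $p_i$), an annulus $U^s\in\A^s$ can straddle both $\Int N$ and a collar so that $U^s\cap \Int N$ need not be annular or maximal, and the rotation number of the annular compactification of $U^s$ for $\psi^s$ can depend on the boundary behavior in the collar, so continuity and non-constancy do not automatically descend to the compactification of the corresponding $\psi$-invariant annulus. The paper avoids all of this by never leaving $N$, precisely because it only needs the two smoothness-sensitive ingredients and replaces them by arguments that take as input a smooth but otherwise uncontrolled extension.
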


\begin{proof}
 The only two places, where Franks and Handel use the smoothness of $\partial N$ in \cite{Fr-Ha} is to prove the following two statements:
 \begin{enumerate}
  \item Let $\sig:[0,1]\to N$ be a smooth curve segment and $\ell$ denote the length with respect to any Riemannian metric in $U$. Then
 \[ \limsup_{n\to\infty} \frac{1}{n} \log \ell(\phi^n(\sig)) \leq \h(\phi). \]
 
  \item There exists a finite family $\RR$ of essential, non-peripheral, non-parallel, simple, closed curves in $\s-\Fix(\psi)$, such that the homeomorphism $\psi\in \Diff(\s,P,\mu)$ is isotopic relative to $\Fix(\psi)$ to a composition of non-trivial Dehn twists in the elements of $\RR$.
 \end{enumerate}
We first discuss item (2). Considering the embedding $\phi:U\to \s$, we show how to find a $C^\infty$ diffeomorphism $F:\s\to \s$, such that $F|_N=\phi|_N$; then we can proceed as in Section 4 of \cite{Fr-Ha}. For the existence of $F$, choose within $U$ $n$ smoothly bounded, compact annuli $A_1,..., A_n$, each $A_i$ containing $\gamma_i$ in its interior and bounding open discs $D_i$ in $\s-N$. It is well-known that each $\phi|_{A_i}$ is isotopic to the inclusion $A_i\hookrightarrow \s$ and by the isotopy extension theorem (Theorem 1.4 on p. 180 of \cite{hirsch}), this isotopy can be extended to a smooth isotopy of the union $D_i\cup A_i$, ending in $\phi$ in each $A_i$. Taking the time-1-map of these isotopies in the $D_i$, we have found $F$. Item (1) now follows from applying Theorem 1.4 in \cite{yomdin} to $F$ and smooth curves $\sig$ in the $F$-invariant set $N\subset\s$.
\end{proof}

\section{Non-ergodicity in the case of the 2-sphere}\label{section sphere}

In this section we let $(\s,F)$ be the 2-sphere with a reversible Finsler metric $F$. Moreover, we will in the following assume that every geodesic of $F$ has conjugate points and claim that a dense geodesic in $S\s$ implies $\h(\phi_F^t)>0$ (Theorem \ref{thm sphere}). In order to prove this, we want to apply the results of J. Franks and M. Handel from Section \ref{section Fr-Ha}, i.e. we need \Poincare sections in $S\s$. A very classical construction is due to G. D. Birkhoff (cf. Section VI.10 of \cite{birkhoff}), which we will recall now.

It is well-known, that there always exists a simple, closed geodesic $c:\R/T\Z\to \s$, $T>0$ being the minimal period of $c$, by minimax methods, cf. Section 15-19 of \cite{birkhoff1}. Letting $N:\R/T\Z\to S\s$ be a unit vector field along $c$, orthogonal to $\dot c$ with respect to the standard round metric denoted by $\skp$, we define a smoothly bounded, compact annulus
\[ A := \{ v\in S\s ~|~ \exists t\in \R/T\Z : \pi v = c(t) , ~ \la v, N(t) \ra >0 \} \subset S\s, \]
which we call the {\em Birkhoff annulus} with base geodesic $c$ (in direction $N$).

\begin{lemma}[Birkhoff, Bangert]\label{lemma birkhoff}
 If $F$ is a reversible Finsler metric on $\s$, then the Birkhoff annulus $A\subset S\s$ with base geodesic $c:\R/T\Z\to \s$ is everywhere transverse to the generator of the geodesic flow $\phi_F^t:S\s\to S\s$ in the interior of $A$. Moreover, if every geodesic of $F$ possesses conjugate points, then every geodesic in $S\s-\{ \dot c(t), -\dot c(t) : t\in \R/T\Z \}$ hits $A$ in uniformly bounded positive and negative times.
\end{lemma}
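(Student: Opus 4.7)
The plan is to treat the two claims separately. For the transversality part, I would argue by a direct linear-algebra computation. Fix $v \in \Int A$ and let $t$ satisfy $\pi v = c(t)$. The strict inequality $\la v, N(t) \ra > 0$ inside $A$ says that $v$ is not a multiple of $\dot c(t)$. Since $A$ is fibered over $c$, every element of $T_v A$ projects under $d\pi$ into the line $\R\dot c(t) \subset T_{c(t)}\s$. On the other hand, the generator of the geodesic flow at $v$ projects under $d\pi$ to $v$ itself. Because $v \notin \R\dot c(t)$, the flow generator is not contained in $T_v A$, which is exactly transversality.

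For the return-time claim I would proceed in two stages. Let $D^+, D^- \subset \s$ denote the two closed discs bounded by $c$, with $N$ pointing into $D^+$. A geodesic crossing $c$ transversely from $D^-$ into $D^+$ produces a point in $A$; one crossing in the opposite sense produces a point in $-A := \{-v : v \in A\}$, which by the reversibility identity $\phi_F^{-t}(-v) = -\phi_F^t v$ translates into a hit of $A$ at a negative time. It therefore suffices to show that no geodesic $\gamma_v$ with $v \ne \pm \dot c(t)$ remains in a single closed hemisphere for all positive (equivalently, all negative) times. Suppose for contradiction that $\gamma_v([0,\infty)) \subset D^+$. Using compactness of $S\s$ I would extract a minimal closed $\phi_F^t$-invariant set $\Sigma$ inside the $\omega$-limit set of $v$. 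Then $\Sigma \subset \pi^{-1}(D^+)$, and any $w \in \Sigma$ yields a bi-infinite geodesic $\gamma_w \subset D^+$. A minimality argument, combined with the fact that the orbit of $v$ is disjoint from $\{\pm \dot c\}$, forces $\gamma_w$ to enter $\Int D^+$. Here one invokes the conjugate-point hypothesis together with the reversibility of $F$ via a variational Birkhoff--Bangert argument: using the existence of a conjugate pair along $\gamma_w$, one constructs inside the simply connected hemisphere a curve strictly shorter than $\gamma_w$ between two of its points, contradicting local minimality.

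For uniformity of the return time I would combine continuity with compactness. By the transversality just established and smooth dependence of the geodesic flow on initial conditions, the first-hit time $\tau$ to $A \cup (-A)$ is continuous on the open set $S\s \setminus \{\pm \dot c(t) : t \in \R/T\Z\}$. The only potential obstruction is blow-up of $\tau$ as $v$ approaches $\pm \dot c(t_0)$. To exclude this, I would upper-bound the distance to the first conjugate point along any geodesic: under the hypothesis, this distance is a finite, upper semi-continuous function on the compact set $S\s$, hence bounded by some $T_{\mathrm{conj}} < \infty$. Any geodesic segment remaining on one side of $c$ for longer than a fixed multiple of $T_{\mathrm{conj}}$ would again contradict the trapping argument of the previous paragraph, yielding the global upper bound on $\tau$. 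The main obstacle is precisely the variational exclusion of a complete bi-infinite geodesic trapped inside one open hemisphere: this is where the conjugate-point hypothesis and the reversibility of $F$ must interact in a Finsler-valid form of Bangert's argument, and this step is the real heart of the lemma.
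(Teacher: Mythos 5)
Your transversality argument is essentially the same computation as the paper's: $d\pi$ sends the geodesic generator at $v$ to $v$ itself, and $d\pi(T_v A)\subset\R\dot c(t)$, so transversality is equivalent to $v\notin\R\dot c(t)$, which is guaranteed in $\Int A$. No issue there.

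The return-time argument has a real gap, and it is precisely at the place you flag yourself. You extract a bi-infinite geodesic $\gamma_w$ trapped in a closed hemisphere and then try to derive a contradiction by saying that a conjugate pair along $\gamma_w$ produces a strictly shorter competitor, ``contradicting local minimality.'' But you never establish that $\gamma_w$ is minimizing in any class. A generic orbit in an $\omega$-limit set has no minimizing property, and a geodesic with conjugate points simply fails to be locally minimizing past the conjugate pair; that is not a contradiction unless minimality was known in advance. The paper supplies exactly the missing variational structure: (i) from the diverging return times it produces a bi-infinite geodesic $c_v$ entirely disjoint from $c$, and then uses conjugate points along the \emph{base} geodesic $c$ (not along the trapped geodesic) to show $c_v$ stays a positive distance from $c$, so that $c$ and $c_v$ bound a locally geodesically convex open annulus $U\subset\s$; (ii) it passes to the $k$-fold covers $U^k$, takes a length-minimizing sequence of simple closed curves in the prime free homotopy class and runs curve shortening (Angenent, reversible Finsler case) to obtain a shortest simple closed geodesic $\gamma^k\subset U^k$; (iii) it rules out $\gamma^k$ lying on either boundary component, again using conjugate points along $c$ to construct shorter nearby curves; (iv) it invokes Hedlund's arguments to conclude that $\gamma^k$ is locally minimizing on arbitrarily long subsegments, hence free of conjugate points, contradicting the hypothesis. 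Steps (i)--(iv) are the content your sketch leaves out, and without them the conjugate-point hypothesis cannot be brought to bear.

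Two smaller points. First, your minimal-set argument does not obviously exclude $\Sigma=\{\pm\dot c\}$, and your claim that $\gamma_w$ must enter $\Int D^+$ needs the quantitative separation from $c$ that the paper derives from conjugate points along $c$. Second, the paragraph on uniformity via an upper bound $T_{\mathrm{conj}}$ on the distance to first conjugate points does not, by itself, force a geodesic to leave a hemisphere; the paper instead gets uniformity for free from the contradiction structure (assume return times $L_n\to\infty$, pass to a limit geodesic disjoint from $c$, derive a contradiction), which is cleaner and avoids the continuity/blow-up discussion near $\pm\dot c$.
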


We give a proof along the lines of V. Bangert's arguments, cf. Section 4 of \cite{bangert} (which discusses the Riemannian case).

\begin{proof}
 For the transversality observe that for $v\in \Int A$ we have $\frac{d}{dt}\big|_{t=0} \pi\circ \phi_F^tv=v$, which is transverse to $\pi(A)=c(\R/T\Z)$ by definition.
 
 Let $L$ be the supremum of times that unit speed geode\-sics take to hit $A$ and assume $L=\infty$. Then there exists a sequence of arc-length geodesic segments $c_n:[0,L_n]\to \s$ with $L_n\to\infty$, disjoint from the base geodesic $c:\R/T\Z\to \s$ of $A$. Letting $v_n:= \dot c_n(L_n/2)\in S\s$, take a convergent subsequence $v_n\to v$, then $c_v:\R\to\s$ is entirely disjoint from $c$. By the assumption that $c$ possesses conjugate points, it is not possible for $c_v$ to come arbitrarily close to $c$ without intersecting it: for then the geodesic flow would take the orbit $\dot c_v(t)$ across $c$ due to the existence of conjugate points, which means that $\phi_F^t$ rotates along the closed orbit $\dot c(\R/T\Z)$. Hence $\inf_{t\in\R}d(c(\R/T\Z),c_v(t))>0$ and the pair of geodesics $c,c_v$ bounds an open annulus $U\subset \s$, which is locally geodesically convex, as it is bounded by (parts of) geodesics.
 
 Fixing $k\geq 1$, take a sequence $\g_n^k$ of smooth, simple, closed curves in the prime homotopy class of the $k$-fold cover $U^k$ of $U$, such that the $F$-lengths $l_F(\g_n^k)$ decrease with $n\to\infty$ to the infimum of lengths of such curves. For each $n$ deform $\g_n^k$ into a closed geodesic in $U^k$ by means of the curve shortening flow for reversible Finsler metrics \cite{angenent1} (the curves stay in $U^k$ due to local geodesic convexity). In the limit, we obtain a shortest, simple, closed geodesic $\g^k$ in the prime homotopy class of $U^k$.
 
 If the boundary component of $U^k$ corresponding to $c_v$ is not smooth, then the smooth curve $\g^k$ is disjoint from this boundary component. On the other hand, $\g^k$ cannot be equal to the $k$-th iterate $c^k$ of $c$ for large $k$ (and by the same reasoning not equal to the other boundary component of $U^k$, if it is smooth). For this, observe that due to the existence of conjugate points along $c$, we can find on both sides of $c^k$ smooth, closed curves close to $c^k$, which are shorter than $c^k$ (cf. Lemma 2 in \cite{bangert} or for the Finsler case the techniques in Chapter 7.4 of \cite{bao-chern-shen}). It now follows from classical arguments of G. A. Hedlund (cf. Section 5 in \cite{hedlund}), that $\g^k$ is in fact prime periodic in $U$ and locally minimizing on arbitrarily long subsegments and hence it has to be free of conjugate points. This contradicts our hypothesis on the existence of conjugate points along every geodesic.
\end{proof}

As a corollary, we obtain a smooth first-return map
\[ \phi: \Int A \to \Int A, \]
which (as a map coming from a Hamiltonian flow) is well-known to preserve a smooth area form, also defined in the interior $\Int A$. This is sometimes called the {\em Birkhoff annulus map}. In order to apply the results from Section \ref{section Fr-Ha}, we need a smooth continuation of $\phi$ to all of $A$.

\begin{lemma}\label{birkhoff smooth}
 If $F$ is a reversible Finsler metric on $\s$ with conjugate points along every geodesic, then the Birkhoff annulus map extends to a $C^\infty$ diffeomorphism $\phi : A\to A$.
\end{lemma}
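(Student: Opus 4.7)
The first-return map $\phi$ is already $C^\infty$ on $\Int A$ by transversality (Lemma \ref{lemma birkhoff}) together with smoothness of $\phi_F^t$, so what remains is to extend $\phi$ smoothly across the two components of $\partial A$, namely the closed $\phi_F^t$-orbits $\gam:=\dot c(\R/T\Z)$ and $-\gam$, both of period $T$. By reversibility of $F$ the analysis near $-\gam$ mirrors that near $\gam$, so the plan is to focus on $\gam$.

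The first step is to trivialize the flow near $\gam$: I would choose a tubular neighbourhood $V\subset S\s$ of $\gam$ with coordinates $(s,x,y)\in\R/T\Z \times D$, for a small disk $D\subset\R^2$ around $0$, in which $\gam$ is $\{x=y=0\}$, the geodesic flow is the suspension of its symplectic Poincar\'e return map $P:D\to D$ fixing $0$, and $A\cap V$ becomes the half-disk $\{y=0,\,x\geq 0\}$ meeting $\gam$ along its diameter. In this picture the Birkhoff first return to $A$ near $\gam$ becomes the question of when the orbit of a point on the positive $x$-axis returns to the positive $x$-axis.

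The key ingredient is the hypothesis that $c$ itself has a pair of conjugate points along itself. Translating this into the Jacobi equation in a normal frame along $c$ (equivalently, the linearization of $\phi_F^t$ transverse to $\gam$), it rules out $dP(0)=\id\in SL(2,\R)$ and, via a Sturm-Liouville comparison of the Pr\"ufer angle in the spirit of Section 4 of \cite{bangert} adapted to the Finsler setting, forces the rotation number of the linearized transverse flow over one period to be strictly positive. In polar coordinates $(r,\theta)$ on $D\setminus\{0\}$ this reads $\dot\theta = \omega(s,\theta)+O(r)$ with $\omega$ bounded below by a positive constant. Hence the first return of an orbit in $\Int A\cap V$ to the half-disk corresponds to the angular coordinate changing by $2\pi$, at a time $\tau(v)$ determined implicitly by $\int_0^\tau(\omega+O(r))\,dt=2\pi$. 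The implicit function theorem in these polar coordinates then delivers a smooth extension of $\tau$ through $r=0$ to all of $A\cap V$, and consequently $\phi=\phi_F^{\tau(\cdot)}$ extends smoothly, with $\phi|_\gam$ being the rigid shift $s\mapsto s+T_*$ on $\gam\cong\R/T\Z$ where $T_*:=\lim_{r\to 0}\tau$. Gluing with the analogous extension near $-\gam$ and the smoothness on $\Int A$ produces a $C^\infty$ map $A\to A$; applying the same argument to the time-reversed flow (conjugate to $\phi_F^t$ via $v\mapsto -v$ by reversibility of $F$) yields a smooth inverse, so $\phi$ is a diffeomorphism.

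The main obstacle is the twist estimate: one has to verify that conjugate points along $c$ force a strictly positive rotation number of the linearized Jacobi flow over one period, not merely non-identity monodromy, since otherwise nearby orbits could wind with zero average rate and the implicit function argument would collapse at $r=0$. I would handle this by a Sturm comparison between the Pr\"ufer angle of the transverse linearized flow and the angular motion associated to the vertical Jacobi field that by hypothesis vanishes at the two conjugate points, giving a net rotation of at least $\pi$ over the span between them and hence a positive rotation rate on average.
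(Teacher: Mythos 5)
Your proposal takes a genuinely different route from the paper's. The paper does not argue via Poincar\'e return maps of transverse discs or rotation numbers of the linearized Jacobi flow. Instead it works in the natural coordinates $(t,s,r)$ on a neighborhood of $\dot c$ in $S\s$, where $t$ runs along the base geodesic, $s$ is the normal coordinate in the base surface, and $r$ is the angle of the unit vector; the Birkhoff section there is $\{s=0,\,r\in[0,\pi]\}$. The return condition becomes the vanishing of the $s$-displacement $F(\tau,t,r):=(\psi^\tau(t,0,r))_s$, which vanishes identically on $\{r=0\}$, so $F=r\cdot G$ for a smooth $G$ (the smoothness of $G$ is the content of the separate technical Lemma~\ref{lemma G smooth}), and the implicit function theorem is applied to $G=0$. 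Crucially, the nondegeneracy needed for this is the elementary fact $\partial_r X_s(t,0,0)\neq 0$ (essentially that the normal component of $V(t,0,r)$ behaves like $\sin r$); the conjugate-point hypothesis enters \emph{only} through Lemma~\ref{lemma birkhoff} to guarantee the uniform upper bound on the return time. There is no rotation-number estimate anywhere.

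Two issues to flag with your argument. First, your claim that conjugate points force a strictly positive rotation rate of the transverse linearized flow via Sturm comparison is plausible but is substantially stronger than what the paper uses, and you do not actually carry it out; it would need the Sturm separation theorem applied along the whole closed geodesic, not just on the interval between the given pair of conjugate points, plus a uniformity statement in $s$. Second and more importantly, your implicit function argument is precisely at the delicate spot, and is not actually justified: you write $\dot\theta=\omega(s,\theta)+O(r)$ in polar coordinates on the transverse disc, but polar coordinates are singular at $r=0$, and the smoothness of the angular evolution (hence of the solution of $\int_0^\tau(\omega+O(r))\,dt=2\pi$) jointly in $(s,r,\theta)$ down to $r=0$ is exactly the non-obvious step. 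The paper handles this by the explicit factorization $F=rG$ and the induction proof that $G\in C^\infty$ (Lemma~\ref{lemma G smooth}), which is doing the real work; your write-up treats this as automatic. Without such an analysis, invoking the implicit function theorem in polar coordinates across $r=0$ is a gap. A minor further point: the limit $T_*$ of the return time as $r\to 0$ depends on $s$, so $\phi|_\gam$ is not a rigid shift in general.
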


\begin{proof}
 By Lemma \ref{lemma birkhoff}, the first-return time $\tau:\Int A\to \R$ of a point $x\in \Int A$ to $A$ under $\phi_F^t$ is uniformly bounded from above. Let us investigate $\tau$ near the boundary $\dot c(\R/T\Z)$ of $A$, the other part $-\dot c(\R/T\Z)$ being treated analogously. If $\tau$ extends to a $C^\infty$ function $A\to\R$ then $\phi(v)=\phi_F^{\tau(v)}v$ is smooth in all of $A$.
 
 We identify a strip in $\s$ around $c(\R/T\Z)$ with $U := \R/T\Z \times (-\e,\e) \ni (t,s)$ in such a way that $c(t)\cong (t,0)$. For $r\in \R$ let $V(t,s,r)$ be the $F$-unit vector in $T_{(t,s)}U$, which makes an angle $r \mod 2\pi$ with the vector $\partial_t\in T_{(t,s)}U$ (angles with respect to the euclidean metric in $U$). Then $V:M \to SU$ is a diffeomorphism between $M:=U\times \R/2\pi\Z$ and the neighborhood $SU\subset S\s$ of $\dot c(\R/T\Z)$. Let $X:S\s\to TS\s$ be the generator of the geodesic flow $\phi_F^tv=\dot c_v(t)$, then $V^*X$ being the pullback of $X$ to a vector field on $M$, we can write
 \begin{align*}
  V^*X &=((V^*X)_t,(V^*X)_s,(V^*X)_r)\in \R^3\cong T_{\pi V^*X}M, \\
  V &=(V_t,V_s)\in \R^2\cong T_{\pi V}U .
 \end{align*}
 Using linearity of the $s$-projection and $\pi V(t,s,r)=(t,s)$, we obtain
 \begin{align*}
  (V^*X(t,s,r))_s & = \left(dV^{-1}(V(t,s,r)) \textstyle \frac{d}{d\tau}\big|_{\tau=0} \dot c_{V(t,s,r)} (\tau) \right)_s \\
  & = \textstyle \frac{d}{d\tau}\big|_{\tau=0} \left(V^{-1} \circ \dot c_{V(t,s,r)} (\tau) \right)_s \\
  & = \textstyle \frac{d}{d\tau}\big|_{\tau=0} \left(c_{V(t,s,r)} (\tau) \right)_s  = \left(V(t,s,r) \right)_s .
 \end{align*}
 Hence
 \begin{align*}
  \textstyle\frac{d}{dr}\big|_{r=0} (V^*X)_s(t,0,r) & = \textstyle\frac{d}{dr}\big|_{r=0} V_s(t,0,r) \neq 0
 \end{align*}
 by definition of $V$. Observe moreover, that
 \[ V^{-1}(A) = \R/T\Z \times \{0\} \times [0,\pi].  \]
 The lemma now follows from Lemma \ref{lemma umberto} below.
\end{proof}

\begin{lemma}\label{lemma umberto}
Let $\psi^\tau : \R^3\to \R^3$ be a local $C^\infty$ flow, such that writing $(t,s,r) \in \R^3$ for the coordinates, we have
 \begin{enumerate}
  \item $\psi^\tau(t,0,0)=(t+\tau,0,0)$, i.e. $\gam:\R \to M$ with $\gam(\tau)=(\tau,0,0)$ is an orbit of $\psi^\tau$,
  
  \item $\psi^\tau$ is transverse to $V:= \R \times \{0\} \times (0,\infty)$ and any orbit in $V$ returns to $V$ after a uniformly bounded, positive time,
  
  \item if $X=(X_t,X_s,X_r)(t,s,r)$ is the generator of $\psi^\tau$, then
  \[ \textstyle\frac{d}{dr}\big|_{r=0} X_s (t,0,r) \neq 0 \quad \forall t\in \R. \]
 \end{enumerate}
 Then the first-return time $\tau:V \to \R$ extends to a $C^\infty$ function $\tau: \overline V\to\R$.
\end{lemma}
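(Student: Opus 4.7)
The strategy is to desingularize the vanishing of $\Psi_s(\tau,t,0,r)$ and then apply the implicit function theorem. By (1) we have $\Psi_s(\tau,t,0,0)\equiv 0$, and trivially $\Psi_s(0,t,0,r)\equiv 0$; applying Hadamard's lemma twice yields a smooth factorization $\Psi_s(\tau,t,0,r) = r\,\tau\,G(\tau,t,r)$ on $\R\times\R\times[0,\infty)$. For $r>0$, the first-return time $\tau(t,0,r)$ is then the smallest positive zero of $G(\cdot,t,r)$. Differentiating $r\tau G = \Psi_s$ in $\tau$ at $\tau=0$ and using $\partial_\tau\Psi_s = X_s\circ\Psi$ together with $X_s(t,0,0)=0$ (forced by (1), since $X$ must be tangent to $\gam$), one computes $G(0,t,0) = \partial_r X_s(t,0,0)$, which is nonzero by (3). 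Consequently $G$ stays away from $0$ on a neighborhood of $\{\tau=0,r=0\}$, so $\tau(t,0,r)$ has a positive lower bound uniform in $t$ on compact intervals; combining with the uniform upper bound from (2), the return times stay in a compact positive interval as $r\to 0^+$, so every subsequential limit is a positive zero of $G(\cdot,t,0)$. Let $\tau^*(t)>0$ denote the smallest such zero.

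The crux is to verify $\partial_\tau G(\tau^*(t),t,0)\neq 0$: once this is in hand, the implicit function theorem produces a smooth $\tilde\tau(t,r)$ with $G(\tilde\tau,t,r)=0$ and $\tilde\tau(t,0)=\tau^*(t)$, which by the uniform lower bound on $\tau$ and simplicity of the zero must coincide with $\tau(t,0,r)$ for small $r>0$, yielding the desired $C^\infty$ extension. To relate $\partial_\tau G$ to the linearized flow, I would divide $\partial_\tau\Psi_s = r(G+\tau\partial_\tau G)$ by $r$ and pass to the limit in $\partial_\tau\Psi_s = X_s\circ\Psi$ using $X_s|_\gam\equiv 0$ and the chain rule, obtaining
\[ G(\tau,t,0) + \tau\,\partial_\tau G(\tau,t,0) = \partial_s X_s(t+\tau,0,0)\,Y_s(\tau) + \partial_r X_s(t+\tau,0,0)\,Y_r(\tau), \]
where $Y(\tau) = \partial_r\Psi(\tau,t,0,0)$ is the variational solution with $Y(0) = (0,0,1)$. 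Since $Y_s(\tau) = \tau\,G(\tau,t,0)$ vanishes at $\tau=\tau^*$, evaluating there gives
\[ \tau^*\,\partial_\tau G(\tau^*,t,0) = \partial_r X_s(t+\tau^*,0,0)\cdot Y_r(\tau^*). \]

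The main obstacle is excluding $Y_r(\tau^*)=0$. The key observation is that $X_s$ and $X_r$ both vanish identically along $\gam$, so the variational equation $\dot Y = DX(t+\tau,0,0)Y$ decouples: the $(Y_s,Y_r)$-block satisfies an autonomous $2$-dimensional linear system independent of $Y_t$, with initial condition $(0,1)\neq 0$. By uniqueness for linear ODEs, this solution is never zero, so $Y_s(\tau^*)=0$ forces $Y_r(\tau^*)\neq 0$. Combined with hypothesis (3), this gives $\partial_\tau G(\tau^*,t,0)\neq 0$, and the implicit function theorem finishes the argument.
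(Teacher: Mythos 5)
Your proof is correct, and it takes a genuinely more careful route than the paper's. The paper defines $F(\tau,t,r)=(\psi^\tau(t,0,r))_s$, factors out only $r$ to get $F=rG$, computes $\partial_\tau G(0,t,0)=\partial_r X_s(t,0,0)\neq 0$ at $\tau=0$, and then asserts ``by the implicit function theorem and the boundedness of $\tau$'' that $\tau$ extends. But in the intended application (the Birkhoff annulus map) the first-return time does \emph{not} tend to $0$ as $r\to 0^+$; it accumulates at a strictly positive $\tau^*(t)$, and the non-degeneracy one really needs is $\partial_\tau G(\tau^*,t,0)\neq 0$ there, not at $\tau=0$. Your argument supplies exactly this: you pass from $\tau=0$ to $\tau=\tau^*$ via the variational equation along $\gamma$, use that the $(Y_s,Y_r)$-block decouples from $Y_t$ (because $X_s,X_r$ vanish identically on $\gamma$, so the first column of $DX|_\gamma$ is trivial) and that a linear ODE solution with nonzero initial data never vanishes, and conclude $Y_r(\tau^*)\neq 0$, whence $\tau^*\partial_\tau G(\tau^*,t,0)=\partial_r X_s(t+\tau^*,0,0)\,Y_r(\tau^*)\neq 0$. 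This is the ingredient that is implicit (or arguably missing) in the paper; your version makes the IFT step actually land at the right zero. Factoring out the extra $\tau$ (so $\Psi_s=r\tau G$) is a pleasant cosmetic improvement that removes the spurious zero at $\tau=0$ and immediately gives the positive lower bound on return times.

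Two small points of hygiene. First, the $(Y_s,Y_r)$-system is \emph{non}-autonomous (its coefficient matrix depends on $\tau$ through the point $\gamma(t+\tau)$); the word ``autonomous'' should be ``decoupled'' or ``closed'' --- the conclusion you draw (uniqueness forces the solution to avoid the origin) is unaffected. Second, when you introduce $\tau^*(t)$ you should say explicitly that it is the (sub)sequential limit of $\tau(t,r)$ as $r\to 0^+$, not merely the smallest positive zero of $G(\cdot,t,0)$: not every zero of $Y_s$ corresponds to a return to $V$, because the sign condition $(\psi^\tau(t,0,r))_r>0$ selects only those zeros with $Y_r(\tau^*)>0$ (in the round-sphere model the first zero of $Y_s$ has $Y_r<0$ and is skipped). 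Once $\tau^*$ is defined as the accumulation point of actual return times, continuity of $\tau(\cdot,r)$ in $r$ together with the isolatedness of the non-degenerate zeros forces convergence, and the identification $\tilde\tau=\tau$ for small $r>0$ goes through as you say.
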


The author is thankful to Umberto Hryniewicz for explaining to him the following proof of Lemma \ref{lemma umberto}.

\begin{proof}
 Set
 \[ F:\R\times \R^2 \to \R, \qquad F(\tau,t,r) := (\psi^\tau (t,0,r))_s , \]
 where $(.)_s:\R^3\to\R$ is the projection onto the $s$-coordinate. Then
 \[ \psi^\tau(t,0,r)\in V \qquad \iff \qquad F(\tau,t,r)=0 \quad \& \quad (\psi^\tau(t,0,r))_r > 0. \]
 By $\gam(\tau)$ being an orbit of $\psi^\tau$, we find $F(\tau,t,0)\equiv 0$ and hence we can write
 \[ F(\tau,t,r) = r \cdot G(\tau,t,r)  \]
 with
 \[ G(\tau,t,r) = \begin{cases} \frac{1}{r}F(\tau,t,r) & : r \neq 0 \\ \frac{\partial}{\partial r}\big|_{r=0} F(\tau,t,r) & : r = 0 \end{cases}. \]
 By Lemma \ref{lemma G smooth} in Appendix \ref{appendix}, $G$ is again a $C^\infty$ function. Observe that now
 \[ \psi^\tau(t,0,r)\in V \qquad \iff \qquad G(\tau,t,r)=0 \quad \& \quad ( \psi^\tau(t,0,r) )_r > 0. \]
 By assumption (2) we have a bounded $C^\infty$ function $\tau=\tau(t,r):V\to\R$ given by the first-return time under $\psi^\tau$ to $V$, which solves
 \[ \qquad G(\tau(t,r),t,r)=0 \quad \& \quad ( \psi^{\tau(t,r)}(t,0,r))_r > 0 \quad \forall(t,0,r)\in V. \]
 Now observe, that by definition of $F,G$
 \begin{align*}
  \frac{d}{d\tau}\bigg|_{\tau=0} G(\tau,t,0) & = \frac{d}{d\tau}\bigg|_{\tau=0} \frac{\partial}{\partial r}\bigg|_{r=0} ( \psi^\tau (t,0,r) )_s = \frac{\partial}{\partial r}\bigg|_{r=0} X_s(t,0,r) \neq 0
 \end{align*}
 by assumption (3). Hence, by the implicit function theorem and the boundedness of $\tau$, we can extend $\tau$ into a neighborhood of $V$ in $\R\times\{0\}\times\R$ and the lemma follows.
\end{proof}

We can thus prove Theorem \ref{thm sphere} from the introduction.

\begin{proof}[Proof of Theorem \ref{thm sphere}]
 By Lemmata \ref{lemma birkhoff} and \ref{birkhoff smooth}, the geodesic flow of $F$ can be reduced to the Birkhoff annulus map $\phi$, which is smooth in the closed annulus $A$. The topological entropy of $\phi$ vanishes by Remark \ref{entropy poincare map} (note that the first-return time is uniformly bounded). Hence, we are in the setting of Section \ref{section Fr-Ha} and obtain a homeomorphism $\psi\in\Diff(\s,P,\mu)$ with $\card P = 2$. The assumed existence of a second closed geodesic leads to an interior periodic point of the Birkhoff annulus map of period $q\geq 1$, say, and hence $\psi^q$ has a third fixed point in $\s-P$. We apply Theorems \ref{thm 1.2 Fr/Ha}, \ref{thm 1.4 Fr/Ha} to $\psi^q$ and let $U\in \A_q$ be a $\psi^q$-invariant, open annulus. Considering the rotation number $\rho: \AA\to \R/\Z$ of the annular compactification $h_U$ of $\psi^q$ as a non-constant, continuous function, we observe that in every interval of $\R/\Z$ there exists some $r$ with $\mu(\rho^{-1}(r))=0$, as the sets $\rho^{-1}(r)$ are disjoint and $\mu(U)<\infty$. We obtain $q+1$ (in fact, infinitely many) disjoint, open, $\psi^q$-invariant subsets $U_0,...,U_1\subset U$ as preimages of the rotation number of open intervals bounded by $r$'s with $\mu(\rho^{-1}(r))=0$, such that also the closures $\overline{U_i}$ are disjoint and $\mu(\overline{U_i})=\mu(U_i)$. Assuming the $U_i$ to be ordered according to area, $\mu(U_0)$ being the smallest, the set $\cup_{i=0}^{q-1} \psi^i(\overline{U_0})$ is closed, $\psi$-invariant, has non-empty interior and
 \[\mu(\cup_{i=0}^{q-1} \psi^i(\overline{U_0})) \leq q \cdot \mu(\overline{U_0}) < (q+1)\cdot \mu( U_0)\leq \mu(\cup_{i=0}^q U_i) \leq \mu(A). \]
 But if there exists a dense geodesic, then $\psi$ possesses a dense orbit, and we have $\cup_{i=0}^{q-1} \psi^i(\overline{U_0}) = A$, contradiction.
\end{proof}

\begin{remark}
 By the results of A. Harris and G. Paternain \cite{harris-pat}, also for $1/4$-pinched non-reversible Finsler metrics and more generally for dynamically convex Reeb flows on $\mathbb{S}^3$ there exist well-behaved, disc-like, global \Poincare surfaces of section. One can probably also prove smoothness of the arising first-return maps on the closure of the disc-like \Poincare surface and then apply the results of J. Franks and M. Handel. Hence, it is quite possible that Theorem \ref{thm sphere} generalizes to $1/4$-pinched non-reversible Finsler metrics and dynamically convex Reeb flows on $\mathbb{S}^3$.
\end{remark}

\section{Non-ergodicity in elliptic tubes for the 2-torus}\label{section torus}

We fix a reversible Finsler metric $F$ on $\T$ with geodesic flow $\phi_F^t:S\T\to S\T$ and assume $\h(\phi_F^t)=0$. In order to prove Theorem \ref{thm torus}, we construct a \Poincare section for the geodesic flow $\phi_F^t:S\T\to S\T$ of $F$, associated to a rational direction $\rho\in S^1$. For this, we use the reversibility of $F$.

Let $z=(z_1,z_2)\in\Z^2-\{0\}$ and $z^\perp := (-z_2,z_1)$. Choose a minimal axis $c^\perp:\R\to\R^2$ of the translation $\R^2\to\R^2$ associated to $z^\perp$ and consider the torus
\[ T^2_z := \R^2/(z\Z \oplus z^\perp \Z) . \]
Writing $\skp$ for the euclidean inner product on $\R^2 \cong T_x T_z^2$ and $\pi: T\T\to\T$ for the canonical bundle projection, we consider the open annulus
\[ A_z  := \{ v\in ST_z^2 ~|~ \exists t\in \R : \pi v = c^\perp(t) , ~ \la v, \dot c^\perp(t) \ra >0 \} \subset ST_z^2. \]
Note that $A_z \cong \R/\Z\times \R$. If $v\in S\T$, we take any lift $\widetilde{c_v}:\R\to\R^2$ of the geodesic $c_v:\R\to\T$ and set
\[ \rho(v) = \lim_{t\to\infty}\frac{\widetilde{c_v}(t)}{|\widetilde{c_v}(t)|} \quad \in S^1. \]
Due to $\h(\phi_F^t)=0$, it follows from Theorem \ref{thm torus htop=0}, that $\rho(v)$ exists for every $v\in S\T$, is independent of the choice of the lift $\widetilde{c_v}$ and $\rho(-v)=-\rho(v)$.

\begin{lemma}\label{C_z transverse}
 $A_z$ is transverse to the geodesic flow $\phi_F^t$. If $\h(\phi_F^t)=0$, then every orbit $\phi_F^tv$ with $v\in S\T$ and $\rho(v)$ lying in the connected component of $z$ in $S^1-\{\pm z^\perp\}$ hits $A_z$ after finite positive and negative time.
\end{lemma}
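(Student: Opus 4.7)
The plan is to handle the two assertions separately: transversality of the geodesic flow to $A_z$ is a local check against the geodesic vector field along the annulus, while the finite return time is a global statement that follows from the structural Theorem \ref{thm torus htop=0} applied to lifts of $c_v$ to $\R^2$.

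At $v\in A_z$ with $\pi v=c^\perp(t_0)$, the generator $X_F$ of the geodesic flow satisfies $d\pi(X_F(v))=v$, while $d\pi(T_vA_z)=\R\dot c^\perp(t_0)$ and $\ker d\pi(v)\subset T_vA_z$. Hence $X_F(v)$ is tangent to $A_z$ if and only if $v\in\R\dot c^\perp(t_0)$, which is excluded by the defining strict half-plane inequality of $A_z$. Transversality therefore holds at every point of $A_z$.

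For the return statement, let $v\in S\T$ have $\rho:=\rho(v)$ in the component of $z$ in $S^1-\{\pm z^\perp\}$. Since $\h(\phi_F^t)=0$, Theorem \ref{thm torus htop=0} supplies a lift $\widetilde{c_v}:\R\to\R^2$ of $c_v$ that stays at Euclidean distance at most $D$ from the straight line $\ell_v$ through $\widetilde{c_v}(0)$ of direction $\rho$, with $\widetilde{c_v}(t)\to\pm\infty$ along $\ell_v$ as $t\to\pm\infty$. The preimage in $\R^2$ of the closed curve underlying $A_z$ in $T^2_z$ is $\bigcup_{k\in\Z}(c^\perp(\R)+kz)$, a family of curves contained in uniform strips about the parallel lines $\R z^\perp+kz$. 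Because $\rho\notin\{\pm z^\perp\}$, the component $\la\widetilde{c_v}(t),z\ra$ is unbounded as $t\to\pm\infty$, and the $D$-strip forces $\widetilde{c_v}$ to meet infinitely many translates $c^\perp(\R)+kz$ in both positive and negative time. Projecting, one obtains sequences of times $t_n^\pm\to\pm\infty$ with $c_v(t_n^\pm)\in c^\perp\subset T^2_z$. The direction control of Theorem \ref{thm torus htop=0} places the velocities $\dot c_v(t_n^\pm)$ close in direction to $\rho$ for $|t_n^\pm|$ large; since $\rho$ is separated from $\pm z^\perp$ and (up to orienting $c^\perp$ appropriately) lies on the correct side of the half-plane defining $A_z$, the velocities eventually satisfy the strict inequality $\la\dot c_v(t_n^\pm),\dot c^\perp\ra>0$, so $\phi_F^{t_n^\pm}v\in A_z$.

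\textbf{Main obstacle.} The delicate step is matching the orientation at the crossings with the half-plane condition defining $A_z$: one must combine the separation of $\rho(v)$ from the singular directions $\pm z^\perp$ with the asymptotic direction control from Theorem \ref{thm torus htop=0} to guarantee that, at sufficiently late crossings, the velocity lies in the correct fiber half-circle rather than the opposite one.
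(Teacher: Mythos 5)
Your transversality step is essentially the paper's argument spelled out: from $d\pi(X_F(v))=v$ and $d\pi(T_vA_z)=\R\dot c^\perp(\pi v)$, tangency would force $v$ to be parallel to $\dot c^\perp$. (For this to be excluded by the definition of $A_z$ you must read the inner product there as being taken with a normal field $N\perp\dot c^\perp$, as in the Birkhoff annulus on $\s$; as literally printed, $\la v,\dot c^\perp\ra>0$ does not exclude $v$ parallel to $\dot c^\perp$, and indeed $\dot c^\perp$ itself would then lie in $A_z$ with the flow tangent there. Your reading is clearly the intended one, and the same point is implicit in the paper's own one-line transversality argument.)

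The return-time step contains a genuine gap. You rightly flag the orientation issue that the paper passes over silently, but the resolution you propose --- that Theorem \ref{thm torus htop=0} provides ``direction control'' placing the velocities $\dot c_v(t_n^\pm)$ close to $\rho$ --- is not a consequence of that theorem and is false in general: Theorem \ref{thm torus htop=0} only controls the \emph{position} of the lifted geodesic (it stays in a $D$-strip about a line of direction $\rho$, or tends to $\pm\infty$ along such a line), not the individual velocity vectors. Even for $v\in\Gam_\rho$, the velocities along the orbit are simply the values of the Lipschitz graph $\Gam_\rho$ at the base points visited and need not converge to $\rho$. The correct way to finish, and what the paper's terse reference to Theorem \ref{thm torus htop=0} (4) relies on, is a last-crossing argument: since $\rho(v)$ lies in the component of $z$ in $S^1-\{\pm z^\perp\}$ we have $\la\rho(v),z\ra>0$, so $\la\widetilde{c_v}(t),z\ra\to\pm\infty$ as $t\to\pm\infty$; hence for each sufficiently large integer $k$ there is a \emph{last} time $t_k\geq 0$ with $\widetilde{c_v}(t_k)\in c^\perp+kz$, after which the lift remains in the higher strips, so the velocity at $t_k$ points into the half-circle defining $A_z$. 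Tangency at such a crossing is impossible by uniqueness of geodesics, since $\rho(v)\ne\pm z^\perp$ whereas $\rho(\pm\dot c^\perp)=\pm z^\perp$. The negative-time case is symmetric, and no finer control of velocity directions is needed.
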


\begin{proof}
 Let $v\in A_z$, then the geodesic $c_v(t)=\pi(\phi^t_Fv)$ is transverse in $t=0$ to $\pi(A_z)=c^\perp(\R)$, showing transversality. The second claim follows directly from Theorem \ref{thm torus htop=0} (4).
\end{proof}

The first-return map to the \Poincare section $A_z$ is a $C^\infty$ diffeomorphism
\[ \phi: A_z\to A_z, \qquad A_z \cong \R/\Z\times \R, \]
preserving a smooth area $\nu$ (by being a first-return map of a flow conjugated to a Hamiltionian flow in $T^*T_z^2$) and we can see a transverse version of Figure \ref{fig htop=0} for $\phi$ in $A_z$. Recall the notation $\Gamma_\rho^\pm$ in Theorem \ref{thm torus htop=0} for the two invariant graphs in $S\T$ with asymptotic direction $\rho\in S^1$ and that for $\rho$ with irrational slope, we have $\Gamma_\rho^-=\Gamma_\rho^+=:\Gamma_\rho$. We use the same notation for the intersections of these graphs with $A_z$. The $\phi_F^t$-invariant tori given by Theorem \ref{thm torus htop=0} then appear as $\phi$-invariant, (Lipschitz) continuous, simple, closed and non-contractible curves $\Gamma_\rho^\pm\subset A_z$.

\begin{lemma}\label{irrational boundaries}
 There exist $\rho_-,\rho_+\in S^1$ with irrational slope, such that $\rho_-<z/|z|<\rho_+$ in the counterclockwise orientation of $S^1$ and such that $\phi:A_z\to A_z$ has no fixed points in $A_z$ between the invariant graphs $\Gam_{\rho_-}, \Gam_{\rho_+}$ other than the ones in the region enclosed by the two graphs $\Gam_{z/|z|}^\pm$.
\end{lemma}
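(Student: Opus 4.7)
The plan is to show that fixed points of $\phi$ in $A_z$ correspond to primitive closed orbits of $\phi_F^t$ in $ST_z^2$ with asymptotic directions in the discrete set $\{\rho_b:=(z+bz^\perp)/|z+bz^\perp|:b\in\Z\}$, and then, exploiting the monotone ordering of the invariant curves $\Gam_\rho^\pm\cap A_z$ coming from Theorem \ref{thm torus htop=0}, to isolate $\rho_0=z/|z|$ by two irrational $\rho_-,\rho_+$.

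First I would identify the fixed points. A fixed point $v\in A_z$ is the initial velocity of a primitive closed orbit of $\phi_F^t$ in $ST_z^2$ which meets $A_z$ exactly once per period; the underlying closed geodesic $c_v$ is then a primitive closed geodesic on $T_z^2$, which one expects to be simple by the structure in Theorem \ref{thm torus htop=0}~(3)-(4) (either $v$ is shortest in its prime class and lies in $\Gam_\rho^-\cap \Gam_\rho^+$, or it lies in an elliptic tube foliated by direction-$\rho$ orbits and is isotopic to such a shortest simple representative). The lift $\widetilde{c_v}$ in $\R^2$ translates by a primitive lattice vector $az+bz^\perp\in z\Z\oplus z^\perp\Z$; the algebraic intersection number of $c_v$ with the simple closed curve $c^\perp/(z^\perp\Z)\subset T_z^2$ equals $a$, and the geometric intersection number equals $|a|$ by simplicity. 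The condition that exactly one crossing lies in $A_z$ with $\langle v,\dot c^\perp\rangle>0$ then forces $|a|=1$, and the orientation condition gives $a=1$. Hence the asymptotic direction of $c_v$ is $\rho_b$ for some $b\in\Z$, and Theorem \ref{thm torus htop=0}~(3)-(4) places $v$ in the region enclosed by $\Gam_{\rho_b}^\pm$.

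Next, I would use the monotone family of invariant graphs in $A_z$. Pulled back to $ST_z^2$ and intersected with $A_z\cong \R/\Z\times\R$, each $\Gam_\rho^\pm$ becomes a $\phi$-invariant, Lipschitz, essential simple closed curve, inheriting from $\T$ the graph property over $T_z^2$. For distinct directions $\rho_1\neq\rho_2$ in the component of $z$ in $S^1-\{\pm z^\perp\}$, the corresponding curves must be disjoint, since a common vector would give two different asymptotic directions $\rho(\cdot)$; together with essentiality in $A_z$, this forces the whole family $\{\Gam_\rho^\pm\cap A_z\}$ to be strictly linearly ordered following the counterclockwise order of $\rho$ in this component.

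Finally, $\{\rho_b:b\in\Z\}$ is a discrete subset of $S^1$ accumulating only at $\pm z^\perp$, so I would pick $\rho_-,\rho_+$ irrational with $\rho_-\in(\rho_{-1},z/|z|)$ and $\rho_+\in(z/|z|,\rho_1)$ in the counterclockwise order. Then $\Gam_{\rho_\pm}$ are disjoint Lipschitz essential simple closed curves in $A_z$ bounding an annular region $R$ which, by the monotone ordering, contains the region enclosed by $\Gam_{z/|z|}^\pm$ but is disjoint from the region enclosed by $\Gam_{\rho_b}^\pm$ for every $b\neq 0$. Any fixed point of $\phi$ in $R$ therefore has direction $\rho_b=z/|z|$ and lies in the region enclosed by $\Gam_{z/|z|}^\pm$, as desired. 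The main technical obstacle will be the first step, namely the simplicity of the underlying closed geodesic and the counting argument yielding $a=1$, both of which rely on careful use of the shortest-geodesic and tube structure provided by Theorem \ref{thm torus htop=0}.
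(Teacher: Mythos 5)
Your proof takes essentially the same route as the paper's: identify the homotopy class of the closed geodesic through a $\phi$-fixed point as $z+\lambda z^\perp$ for some $\lambda\in\Z$, choose irrational $\rho_\pm$ isolating $z/|z|$ from the directions $(z+\lambda z^\perp)/|z+\lambda z^\perp|$ with $\lambda\neq 0$, and use the ordering of the invariant graphs in $A_z$ to localize the remaining fixed points between $\Gamma_{z/|z|}^\pm$. The paper's own proof is terse and essentially asserts the homotopy-class claim, so your elaboration is in the right spirit.

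On the step you yourself flag as the main obstacle: the detour through simplicity of $c_v$ and the claim ``geometric intersection number $=|a|$'' is shakier than needed. Two simple closed geodesics on a non-flat torus need not be in minimal position (they can bound bigons), so simplicity of $c_v$ does not by itself yield geometric $=|$algebraic$|$; nor is simplicity of $c_v$ obviously available inside an elliptic tube. A cleaner way to get the $z$-coefficient equal to $1$, avoiding both issues: the returns of the orbit of $v$ to $A_z$ are exactly the crossings of $c_v$ with $c^\perp$ whose transverse component has the sign defining $A_z$; since $v$ is a fixed point of the first-return map, there is exactly one such crossing per period. Hence the algebraic intersection number $a$ (positive minus negative crossings) satisfies $a\le 1$. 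On the other hand, if $\dot c_v$ lies between $\Gamma_{\rho_-}$ and $\Gamma_{\rho_+}$ then $\rho(v)$ lies between $\rho_-$ and $\rho_+$ by Theorem~\ref{thm torus htop=0}~(4) and the ordering of the graphs, forcing $a>0$, so $a=1$. This gives the homotopy class $z+\lambda z^\perp$ directly, and the remainder of your argument then closes the proof as intended.
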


\begin{proof}
 If we choose a neighborhood of $z$ in $z+\R z^\perp$ that intersects $\Z^2$ only in $z$, we can choose irrational $\rho_\pm$ such that the lines $\R_{>0}\rho_\pm$ intersect $z+\R z^\perp$ in that neighborhood on either side of $z$. Let $v\in A_z$ be a fixed point for $\phi$, then $c_v:\R\to \T$ is a closed geodesic with some homotopy class $z+\lam z^\perp\in \Z^2$ for some $\lam\in \Z$ and if $\lam\neq 0$, the asymptotic direction $\rho(v)= \frac{z+\lam z^\perp}{|z+\lam z^\perp|}$ lies outside the segment between $\rho_\pm$ by construction. On the other hand, if the orbit $\dot c_v$ lies between $\Gam_{\rho_\pm}$, then $\rho(v)$ lies between $\rho_\pm$ in $S^1$, so $\lam=0$ and hence $\rho(v)=z/|z|$ and $\dot c_v$ lies between $\Gam_{z/|z|}^\pm$.
\end{proof}

We want to study the behavior of $\phi:A_z\to A_z$ in the space between the $\phi$-invariant Lipschitz curves
\[ \Gam_\pm := \Gam_{z/|z|}^\pm \subset A_z. \]
By Lemma \ref{irrational boundaries}, we are also given two disjoint, $\phi$-invariant Lipschitz curves
\[ \gam_\pm := \Gam_{\rho_\pm} \subset A_z \]
and we restrict ourselves to the subset $N$ of $A_z$ between $\gam_-$ and $\gam_+$, restricting the area $\nu$ to $\Int N$. By Remark \ref{entropy poincare map}, we find
\[ \h(\phi|_N) \leq \h(\phi_F^t)=0 . \]
Collapsing $\gam_\pm$ into two points $p_\pm\in\s$, we are precisely in the situation of Lemmata \ref{lemma mu}, \ref{FrHa works on T^2} and hence can apply Theorems \ref{thm 1.2 Fr/Ha}, \ref{thm 1.4 Fr/Ha} of Franks and Handel in Section \ref{section Fr-Ha}. Analogous to Definition \ref{def elliptic tubes}, we call the components $E$ of $N-\cup\Gam_\pm$ between $\Gam_\pm$ elliptic islands. Since the non-empty intersection $\cap\Gam_\pm$ consists of fixed points for $\phi$, elliptic islands are $\phi$-invariant (instead of merely being permuted). Cf. Figure \ref{fig elliptic_island} for the notation.

\begin{figure}[!htb]\centering
\includegraphics[scale=0.7]{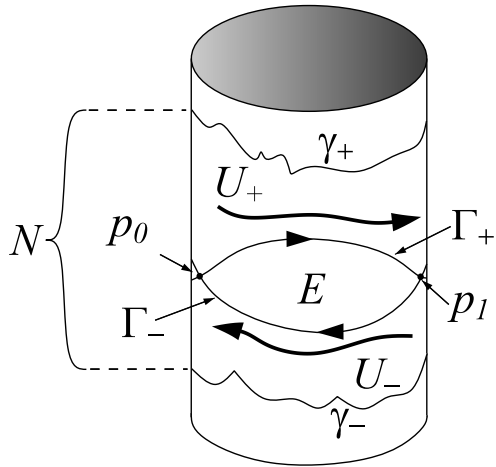}
\caption{The annulus $A_z$ with the curves $\gam_\pm,\Gam_\pm$ and fixed points $p_0,p_1$ in the boundary of an elliptic island $E$. The arrows indicate the principle direction of $\phi$. \label{fig elliptic_island}}
\end{figure}

Next, we observe that the regions above $\Gam_+$ and below $\Gam_-$ in $N$ actually occur as $\phi$-invariant, open annuli in Theorem \ref{thm 1.2 Fr/Ha}. As in Section \ref{section Fr-Ha}, we write
\[ M=N-\Fix(\phi), \qquad f=\phi|_M . \]
We will use the following fact, cf. Theorem (2.1) in \cite{franks}. Namely, if $h:\Int \AA\to \Int \AA$ is a fixed-point free, orientation- and area-preserving homeomorphism of the open annulus, then the set of points with vanishing rotation number has measure zero.

\begin{lemma}\label{annulus in island}
The open annuli $U_-$ between $\gam_-$ and $\Gam_-$ and $U_+$ between $\Gam_+$ and $\gam_+$ belong the the collection $\A$ of maximal $f$-invariant, open annuli in Theorem \ref{thm 1.2 Fr/Ha}.
\end{lemma}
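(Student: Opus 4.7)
By Theorem \ref{thm 1.2 Fr/Ha}(2), it suffices to verify that $U_\pm$ is a maximal $f$-invariant open annulus in $M$; any $V \in \A$ containing $U_\pm$ must then equal $U_\pm$ by maximality. I treat $U_-$; the case of $U_+$ is symmetric. That $U_-$ is an $f$-invariant open annulus is immediate: it is the region of $A_z$ strictly between the disjoint $\phi$-invariant Lipschitz simple closed non-contractible curves $\gam_-$ and $\Gam_-$, and $f$-invariance follows from $\phi$-invariance of the boundaries. Lemma \ref{irrational boundaries} ensures $U_-$ contains no fixed points of $\phi$, so $U_- \subset M$.

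For maximality, suppose $V \supsetneq U_-$ is an $f$-invariant open annulus in $M$. Since $V \subset M$ excludes the point $p_- = [\gam_-] \in \Fix(\psi)$, path-connectedness of $V$ forces $V \cap \Gam_- \neq \emptyset$. This intersection is open in $\Gam_-$, $f$-invariant, and avoids the $\phi$-fixed points on $\Gam_-$; by Theorem \ref{thm torus htop=0}(3) it therefore contains an entire heteroclinic orbit $\gamma \subset \Gam_-$ between two $\phi$-fixed points $p_\alpha, p_\omega$. Openness of $V$ then provides an $A_z$-neighborhood of $\gamma$ contained in $V$, spanning both sides of $\Gam_-$; in particular $V \cap E \neq \emptyset$.

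To reach the contradiction, work in the annular compactification $h_V : \AA \to \AA$. The closed arc $\gamma \cup \{p_\alpha, p_\omega\}$ corresponds to a proper $h_V$-invariant arc in $\AA$ with both endpoints on $\partial \AA$, cutting $\AA$ into two $h_V$-invariant components---one containing the image of $U_-$, the other containing the image of $V \cap E$. By Theorem \ref{thm 1.4 Fr/Ha}(1) the rotation number $\rho_{h_V}$ is continuous on $\AA$; it vanishes along $\gamma$ (lifts to the universal cover of $\AA$ remain bounded, as $\gamma$ is heteroclinic between prime ends corresponding to $f$-fixed points), while at the boundary circle of $\AA$ corresponding to the $p_-$-end of $V$ it equals the irrational rotation $\rho_-^*$ induced by $\phi|_{\gam_-}$. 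The combination of these prescribed rotation values at the two ends, the $h_V$-invariant separation of $\AA$ by the arc, and the non-constancy of $\rho_{h_V}$ from Theorem \ref{thm 1.4 Fr/Ha}(2) yields the required inconsistency. \emph{The main obstacle} is making this final topological/rotation-theoretic step precise: it demands a careful analysis of the prime-end structure of $V$ at $p_\alpha, p_\omega$ to exclude pathological configurations, drawing on the full apparatus developed in \cite{Fr-Ha}.
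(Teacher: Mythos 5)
Your argument has a genuine gap, and you acknowledge it yourself in the final paragraph: the separation of the annular compactification $\AA$ by the (purported) proper arc extending $\gamma$, together with the prescribed rotation values at its two ends, is never actually brought to a contradiction. Beyond being incomplete, this step has real problems: the heteroclinic orbit $\gamma$ is an \emph{open} orbit in $V$ (its endpoints $p_\alpha,p_\omega$ are fixed points of $\phi$, hence lie in $\Fix(\psi)$ and are not in $M$), so it does not automatically correspond to a compact arc with endpoints on $\partial\AA$; establishing this requires precisely the prime-end analysis you defer. And even granted the separation, it produces two invariant discs inside $\AA$, not two annuli, so it is unclear what statement about $\rho_{h_V}$ would be violated -- Theorem \ref{thm 1.4 Fr/Ha}(2) asserts non-constancy, which is compatible with this picture rather than contradicted by it.

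The paper's argument is much shorter and bypasses the compactification of $V$ entirely. Take the maximal $U\in\A$ with $U\supset U_+$ guaranteed by Theorem \ref{thm 1.2 Fr/Ha}(2). If $U\cap\Gam_+\neq\emptyset$, then $U$, being open, contains a nonempty open set of points lying strictly below $\Gam_+$, i.e.\ inside elliptic islands. Every such point is confined to a topological disc bounded by arcs of $\Gam_\pm$ and so has rotation number $0$ in $U$ (its lifts to the universal cover of $U$ stay bounded). This gives a positive-measure set of rotation-number-zero points for the fixed-point-free, area-preserving map $f|_U$ of the open annulus $U$, contradicting the fact recalled just before the lemma (Theorem 2.1 of \cite{franks}). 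Hence $U\cap\Gam_+=\emptyset$ and $U=U_+$. This is the key idea you missed: you reach for Theorem \ref{thm 1.4 Fr/Ha} and the compactification machinery, when the elementary Franks rotation-number dichotomy for fixed-point-free annulus maps suffices and was set up for exactly this purpose.
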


\begin{proof}
By Lemma \ref{irrational boundaries}, both $U_\pm$ belong to $M$ and by invariance of $\gam_\pm,\Gam_\pm$, they are $f$-invariant. By Theorem \ref{thm 1.2 Fr/Ha}, there exists an $f$-invariant, open annulus $U\in\A$, such that $U\supset U_+$. Suppose that $U\cap\Gam_+\neq \emptyset$, and note that every point $x$ below $\Gam_+$ has zero rotation number in the annulus $U$, since it is contained in an elliptic island. But then $f|_U$ has a fixed point by the fact recalled above, contradicting $U\subset M$.
\end{proof}

We can now prove Theorem \ref{thm torus}. In the following proof, we fix an elliptic island $E\subset N$, which has two fixed points $p_0,p_1\in\partial E$, cf. Figure \ref{fig elliptic_island}.

\begin{proof}[Proof of Theorem \ref{thm torus}]
For item (1), note that we can restrict ourselves to the set $E\cap M$. Since the set of weakly free disc recurrent points $\W\cap E$ has full measure in $E\cap M$ and since the set of maximal invariant annuli $\A$ in Theorem \ref{thm 1.2 Fr/Ha} is countable, we can even restrict to some $U\in\A$ lying in $E$ by Lemma \ref{annulus in island}. If $x\in U$ with $\om(\phi,x)\cap\partial E\neq \emptyset$, then by the heteroclinic dynamics in $\partial E-\{p_0,p_1\}$, we obtain $\om(\phi,x)\cap \{p_0,p_1\}\neq \emptyset$. Hence, one point $p\in\{p_0,p_1\}$ lies in one end $\partial_1 U$ in $N$. On the other hand, $\partial_1 U$ cannot consist only of $p$, since $U\subset \Int E$ by Lemma \ref{annulus in island}. Thus, Lemma \ref{lemma prime ends} shows that in the annular compactification $h_U:\AA\to\AA$ we find a fixed point $\tilde p$ of $h_U$ in $\partial_1\AA$ corresponding to $\partial_1U$ and hence the rotation number of $h_U|_{\partial_1\AA}$ vanishes. If the above $x\in U$ corresponds to some $\tilde x\in \AA$, then we find $\partial_1\AA \cap \om(h_U,\tilde x) \neq \emptyset$ and by continuity of the rotation number in Theorem \ref{thm 1.4 Fr/Ha}, we have $\rho_{h_U}(\tilde x)=0$. But by the above stated special case of Theorem (2.1) in \cite{franks}, this can happen only for a set of points $\tilde x\in\AA$ (and hence $x\in U$) of measure zero, for the homeomorphism $h_U|_{\Int\AA}$ is fixed-point free by definition. This proves (1).

For (2) observe first that a dense orbit for the geodesic flow $\phi_F^t|_E$ will be dense for the \Poincare map $\phi|_E$ and hence also be dense in some $U\in\A$ by Lemma \ref{annulus in island}. Then Theorem \ref{thm 1.4 Fr/Ha} (1) shows that $\rho_{h_U}$ is constant, while $\psi$ has at least three fixed points $p\in\{p_0,p_1\}$ and $p_-,p_+$ corresponding to $\gamma_-,\gamma_+$. Hence $\rho_{h_U}$ is non-constant by Theorem \ref{thm 1.4 Fr/Ha} (2), contradiction.
\end{proof}

\newpage

\appendix

\section{}\label{appendix}

In Section \ref{section sphere} we used the following lemma, which we will prove here.

\begin{lemma}\label{lemma G smooth}
 Let $F:\R^n \to\R$ be a $C^\infty$ function, such that writing $(x,t)\in \R^{n-1}\times \R$ we have $F(x,0)\equiv 0$. Set
 \[ G:\R^n\to\R, \quad G(x,t):= \begin{cases} F(x,t)/t &: t\neq 0 \\ \partial_t F(x,0) &: t =0 \end{cases} . \]
 Then $G:\R^n\to\R$ is $C^\infty$ as well.
\end{lemma}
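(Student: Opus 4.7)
The key idea is to rewrite $G$ as an integral that is manifestly smooth in $(x,t)$, using Hadamard's lemma. The plan is first to check that the obvious candidate formula
\[ G(x,t) = \int_0^1 (\partial_t F)(x, st)\, ds \]
agrees with the piecewise definition, and then to invoke smooth dependence of parameter integrals.

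First I would observe that, since $F(x,0)\equiv 0$, the fundamental theorem of calculus (applied in the $t$-variable with $x$ fixed) gives
\[ F(x,t) = F(x,t) - F(x,0) = \int_0^t (\partial_t F)(x,\tau)\, d\tau. \]
Performing the substitution $\tau = st$ for $t\neq 0$ transforms this into $F(x,t) = t\int_0^1 (\partial_t F)(x, st)\, ds$, so that for $t\neq 0$ we have $G(x,t) = \int_0^1 (\partial_t F)(x, st)\, ds$. At $t=0$ the same integral formula evaluates to $\int_0^1 (\partial_t F)(x,0)\, ds = (\partial_t F)(x,0)$, which matches the defined value of $G(x,0)$. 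Hence on all of $\R^n$ we have the single expression
\[ G(x,t) = \int_0^1 (\partial_t F)(x, st)\, ds. \]

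Finally, since $(x,t,s)\mapsto (\partial_t F)(x,st)$ is $C^\infty$ on $\R^n \times [0,1]$ and all its partial derivatives in $(x,t)$ are continuous (in particular bounded on compact sets in $(x,t)$ uniformly in $s\in[0,1]$), the standard theorem on differentiation under the integral sign allows us to commute $\partial_x^\alpha \partial_t^\beta$ with $\int_0^1 ds$ for any multi-indices. Each such derivative yields another continuous integrand, so $G$ admits continuous partial derivatives of all orders, i.e.\ $G\in C^\infty(\R^n)$. There is no real obstacle here; the only point that requires care is checking that the integral representation agrees with the definition at $t=0$, which is immediate.
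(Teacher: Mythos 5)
Your proof is correct, and it takes a genuinely different route from the paper's. You use the standard Hadamard-lemma device: the integral representation
\[
G(x,t)=\int_0^1 (\partial_t F)(x,st)\,ds,
\]
obtained from the fundamental theorem of calculus and the substitution $\tau=st$, which exhibits $G$ as a parameter integral of a smooth function; smoothness then follows from differentiation under the integral sign (justified by continuity of all partials of the integrand, hence local uniform bounds in $s\in[0,1]$). The paper instead argues directly and inductively: it Taylor-expands $\partial_t^k F$ in $t$ with remainder, derives the recursion $\partial_t^k G(x,t) = \bigl(\partial_t^k F(x,t) - k\,\partial_t^{k-1}G(x,t)\bigr)/t$ for $t\ne 0$, and shows by induction on $k$ (together with l'H\^{o}pital) that $G$ is $C^k$ with $\partial_t^k G(x,0)=\frac{1}{k+1}\partial_t^{k+1}F(x,0)$, then separately checks the mixed $x$--$t$ derivatives commute. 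Your approach is considerably shorter and relies on one general theorem; the paper's is longer but entirely elementary and yields explicit formulas for the $t$-derivatives of $G$ at $t=0$, which the integral representation would give only after further computation. Both are complete proofs.
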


\begin{proof}
 A Taylor expansion of $\partial_t^kF$ with $k\in \N_0$ in the $t$-variable shows
 \begin{align}\label{eqn F taylor}
  & ~ \partial_t^kF(x,t) \\ \nonumber
  = & ~ \partial_t^kF(x,0) + t \partial_t^{k+1}F(x,0) + \frac{t^2}{2} \partial_t^{k+2}F(x,0) + \frac{t^3}{6} \partial_t^{k+3}F(x,\tau)
 \end{align}
 for some $\tau \in [-1,1]$, if $|t|\leq 1$. Via induction on $k$ one easily shows
 \begin{align} \label{eqn G ableitung}
  \partial_t^k G(x,t) = \frac{\partial_t^k F(x,t) -k \partial_t^{k-1}G(x,t)}{t} \qquad \text{in } \{t\neq 0\}.
 \end{align}
 We will prove via induction on $k$ that $G$ is a $C^k$-function with $\partial_t^kG(x,0)=\frac{1}{k+1}\partial^{k+1}_tF(x,0)$. For $k=0$ this is clear from the definition of $G$. Assume now that the statement is true for $k-1,k$. We then find using \eqref{eqn F taylor}, \eqref{eqn G ableitung}
 \begin{align*}
 &~ \frac{\partial_t^kG(x,t)-\partial_t^kG(x,0)}{t} \\
 =&~ \frac{\partial_t^k F(x,t) -k \partial_t^{k-1}G(x,t)-t  \partial_t^kG(x,0)}{t^2} \\
 =&~ \frac{ \left\{ \begin{array}{c} \partial_t^kF(x,0) + t \partial_t^{k+1}F(x,0) + \frac{t^2}{2} \partial_t^{k+2}F(x,0) + \frac{t^3}{6} \partial_t^{k+3}F(x,\tau) \\ -k \partial_t^{k-1}G(x,t)-t  \partial_t^kG(x,0) \end{array} \right\} }{t^2} \\
 =&~ \frac{ \left\{ \begin{array}{c} \partial_t^kF(x,0) -k \partial_t^{k-1}G(x,t) \\ + t\cdot  \big( \partial_t^{k+1}F(x,0) - \partial_t^kG(x,0) \big) \end{array} \right\} }{t^2}  + \frac{1}{2} \partial_t^{k+2}F(x,0) + \frac{t}{6} \partial_t^{k+3}F(x,\tau) .
 \end{align*}
 By our induction hypothesis for $k-1$, we have $\partial_t^kF(x,0) -k \partial_t^{k-1}G(x,t)\to 0$ as $t\to 0$ and applying the l'Hospital rule and $\partial_t^kG(x,0)=\frac{1}{k+1}\partial^{k+1}_tF(x,0)$, the above fraction tends to
 \begin{align*}
   &~ \lim_{t\to 0} \frac{ \partial_t^{k+1}F(x,0) - \partial_t^kG(x,0) -k \partial_t^{k}G(x,t) }{2t} \\
   = &~ \lim_{t\to 0} \frac{ \partial_t^{k+1}F(x,0) - (k+1)\partial_t^kG(x,0) +k \partial_t^{k}G(x,0)-k \partial_t^{k}G(x,t) }{2t} \\
   = &~ \frac{-k}{2} \cdot \lim_{t\to 0} \frac{ \partial_t^{k}G(x,t) - \partial_t^{k}G(x,0) }{t} .
 \end{align*}
 Hence, we obtain the existence of the limit
 \begin{align*}
  (2+k) \cdot \partial_t^{k+1}G(x,0) = (2+k) \cdot \lim_{t\to 0} \frac{\partial_t^kG(x,t)-\partial_t^kG(x,0)}{t} = \partial_t^{k+2}F(x,0) .
 \end{align*}
 Using \eqref{eqn G ableitung}, the induction hypothesis for $k$, the l'Hospital rule and our just found formula for $\partial_t^{k+1}G(x,0)$, we obtain
 \begin{align*}\label{G ableitung stetig}
  \lim_{t\to 0} \partial_t^{k+1} G(x,t) &= \lim_{t\to 0} \frac{\partial_t^{k+1} F(x,t) -(k+1) \partial_t^k G(x,t)}{t} \\ \nonumber
  &= \partial_t^{k+2} F(x,0) - (k+1) \partial_t^{k+1} G(x,0) = \partial_t^{k+1}G(x,0).
 \end{align*}
 The continuity of $\partial_t^{k+1} G$ in $x$ at $t=0$ is obvious from $F$ being $C^{k+2}$. To check the other partial derivatives of $G$ not involving $t$, observe
 \[ \partial_{x_{i_1}}\cdots \partial_{x_{i_l}} G(x,t) = \begin{cases} \frac{1}{t} \partial_{x_{i_1}}\cdots \partial_{x_{i_l}} F(x,t) & : t\neq 0 \\  \partial_t \partial_{x_{i_1}}\cdots \partial_{x_{i_l}} F(x,0) & : t = 0 \end{cases} . \]
 This function is continuous, since $\partial_{x_{i_1}}\cdots \partial_{x_{i_l}} F(x,0) \equiv 0$ due to $F(x,0)\equiv 0$. To check derivatives of the form $\partial_t^{k+1-l} \partial_{x_{i_1}}\cdots \partial_{x_{i_l}} G$, apply the above formula for $\partial_{x_{i_1}}\cdots \partial_{x_{i_l}} G$ and observe that we can apply the above arguments to $F$ replaced by the functions $\partial_{x_{i_1}}\cdots \partial_{x_{i_l}} F$. Having this in mind, observe that by \eqref{eqn G ableitung} and our formula for $\partial_tG(x,0)$
 \begin{align*}
  \partial_{x_j} \partial_t G(x,t) & = \begin{cases} \partial_{x_j} \frac{\partial_t F(x,t) - \frac{1}{t}F(x,t)}{t} & : t\neq 0 \\ \partial_{x_j} \frac{1}{2} \partial_t^2 F(x,0) & : t = 0 \end{cases} \\
  & = \begin{cases} \frac{\partial_t (\partial_{x_j} F)(x,t) - \frac{1}{t}(\partial_{x_j} F)(x,t)}{t} & : t\neq 0 \\ \frac{1}{2} \partial_t^2 (\partial_{x_j} F)(x,0) & : t = 0 \end{cases} \\
  & = \partial_t (\textstyle \frac{1}{t}\partial_{x_j} F)(x,t) \\
  & = \partial_t \partial_{x_j} G (x,t) .
 \end{align*}
 This shows that we can exchange $\partial_t$ and $\partial_{x_j}$ applied to $G$, so that we can always obtain the form $\partial_t^{k+1-l} \partial_{x_{i_1}}\cdots \partial_{x_{i_l}} G$ of the partial derivatives. Thus, $G$ is also $C^{k+1}$.
\end{proof}

\abs
{\footnotesize
The author's preprint \cite{paper2} is available online at

\url{http://www.ruhr-uni-bochum.de/ffm/Lehrstuehle/Lehrstuhl-X/jan.html}.
}


\begin{thebibliography}{99}

\bibitem[Ang92]{angenent} S. Angenent -- \emph{The topological entropy and invariant circles of an area preserving twist map}. IMA Volumes in Mathematics and its Applications 44 (1992), 1-7.

\bibitem[Ang08]{angenent1} S. Angenent -- {\em Self-intersecting geodesics and entropy of the geodesic flow}. Acta Mathematica Sinica 24.12 (2008), 1949-1952.

\bibitem[Ban93]{bangert} V. Bangert -- \emph{On the existence of closed geodesics on two-spheres}. International Journal of Mathematics 4.1 (1993), 1-10.

\bibitem[BCS00]{bao-chern-shen} D. D.-W. Bao, S. S. Chern, Z. Shen -- \emph{An introduction to Riemann-Finsler geometry}. Graduate Texts in Mathematics 200, Springer Verlag (2000).

\bibitem[BL14]{labrousse} P. Bernard, C. Labrousse -- \emph{An entropic characterization of the flat metrics on the two torus}. arXiv:1406.0794 [math.DS] (2014).

\bibitem[Bir17]{birkhoff1} G. D. Birkhoff -- {\em Dynamical Systems with Two Degrees of Freedom}. Transactions of the American Mathematical Society 18.2 (1917), 199-300.

\bibitem[Bir27]{birkhoff} G. D. Birkhoff -- {\em Dynamical Systems}. American Mathematical Society, Colloquium Publications IX (1927).

\bibitem[CIPP98]{contreras} G. Contreras, R. Iturriaga, G. P. Paternain, M. Paternain -- \emph{Lagrangian graphs, minimizing measures and \mane's critical values}. Geometric and Functional Analysis 8 (1998), 788-809.



\bibitem[Din71]{dinaburg} E. I. Dinaburg -- \emph{On the relations among various entropy characteristics of dynamical systems}. Mathematics of the USSR-Izvestiya 5.2 (1971), 337-378.

\bibitem[Don88]{donnay} V. J. Donnay -- {\em Geodesic flow on the two-sphere, part II: Ergodicity}. Dynamical Systems, Lecture Notes in Mathematics 1342 (1988), 112-153.


\bibitem[Fra92]{franks} J. Franks -- \emph{Geodesics on $\s$ and periodic points of annulus homeomorphisms}. Inventiones Mathematicae 108 (1992), 403-418.

\bibitem[FH12]{Fr-Ha} J. Franks, M. Handel -- \emph{Entropy zero area preserving diffeomorphisms of $\s$}. Geometry \& Topology 16 (2012), 2187-2284.


\bibitem[GK10]{glasm1} E. Glasmachers, G. Knieper -- \emph{Characterization of geodesic flows on $\T$ with and without positive topological entropy}. GAFA 20 (2010), 1259-1277.

\bibitem[GK11]{glasm2} E. Glasmachers, G. Knieper -- \emph{Minimal geodesic foliation on $\T$ in case of vanishing topological entropy}. Journal of Topology and Analysis 3.4 (2011), 1-10.

\bibitem[Gra89]{grayson} M. A. Grayson -- {\em Shortening embedded curves}. Annals of Mathematics 129 (1989), 71-111.

\bibitem[HP08]{harris-pat} A. Harris, G. Paternain -- \emph{Dynamically convex Finsler metrics and $J$-holomorphic embedding of asymptotic cylinders}. Annals of Global Analysis and Geometry 34.2 (2008), 115-134. 

\bibitem[Hed32]{hedlund} G. A. Hedlund -- \emph{Geodesics on a two-dimensional Riemannian manifold with periodic coefficients}. The Annals of Mathematics 33.4 (1932), 719-739.

\bibitem[Hir76]{hirsch} M. W. Hirsch -- \emph{Differential topology}. Graduate Text in Mathematics 33, Springer Verlag (1976)



\bibitem[Kat73]{katok} A. Katok -- \emph{Ergodic perturbations of degenerate integrable Hamiltonian systems}. Mathematics of the USSR-Izvestiya 7.3 (1973), 535-571.

\bibitem[Kat80]{katok1} A. Katok -- \emph{Lyapunov exponents, entropy and periodic orbits for diffeomorphisms}. Publications mathmatiques de l'I.H.E.S. 51 (1980), 137-173.

\bibitem[Mat82]{mather-prime-end} J. N. Mather -- {\em Topological proofs of some purely topological consequences of Caratheodory's theory of prime ends}. Selected studies, North-Holland Publishing Company (1982), 225-255.


\bibitem[OU41]{oxtoby_ulam} J. C. Oxtoby, S. M. Ulam -- \emph{Measure preserving homeomorphisms and metrical transitivity}. Annals of Mathematics 42.4 (1941), 874-920.

\bibitem[Pat91]{paternain} G. P. Paternain -- \emph{Entropy and completely integrable Hamiltonian systems}. Proceedings of the American Mathematical Society 113.3 (1991), 871-873.

\bibitem[Sch14]{paper2} J. P. Schr\"oder -- \emph{Invariant tori and topological entropy in Tonelli Lagrangian systems on the 2-torus}. Preprint (2014).

\bibitem[Yom87]{yomdin} Y. Yomdin -- \emph{Volume growth and entropy}. Israel Journal of Mathematics 57.3 (1987), 285-300.


\end{thebibliography}
\end{document}